\documentclass{amsart}
\usepackage{amsmath,amssymb,mathrsfs,bbm,yhmath,longtable,mathtools}
\usepackage{amsthm,rotating,xcolor,epsfig,hyperref,url,MnSymbol,rotate}
\usepackage{graphicx}
\usepackage[weather]{ifsym}

\textwidth=16cm
\textheight=22cm
\hoffset=-1.5cm
\voffset=-2cm
\marginparwidth=2cm
\date{}

\theoremstyle{theorem}
\newtheorem{theo}{Theorem}[section]
\newtheorem{lemm}{Lemma}[section]
\newtheorem{prop}{Proposition}[section]
\newtheorem{coro}{Corollary}[section]

\theoremstyle{remark}

\theoremstyle{definition}
\newtheorem{defi}{Definition}[section]

\numberwithin{equation}{section}
\numberwithin{figure}{section}

\graphicspath{{./pictures/}}	

\author {Ivan Dynnikov and Vera Sokolova}
\address{\noindent Steklov Mathematical Institute of Russian Academy of Sciences, 8 Gubkina Str., Moscow 119991, Russia}
\address{\noindent St.\ Petersburg State University, Line 14th (Vasilyevsky Island), 29, Saint Petersburg, 199178, Russia}
\thanks{The work of the first named author is supported by the Russian Science Foundation under grant~19-11-00151.}
\email{dynnikov@mech.math.msu.su}
\address{Lomonosov Moscow State University, 1 Leninskije gory, Moscow 119991, Russia}
\email{sokolova.vera00@yandex.ru}

\title{Multiflypes of rectangular diagrams of links}

\begin{document}

\maketitle
\begin{abstract}
We introduce a new very large family of transformations of rectangular diagrams of links
that preserve the isotopy class of the link.
We provide an example when two diagrams of the same complexity are related by such a transformation and
are not obtained from one another by any sequence of `simpler' moves not increasing the complexity
of the diagram along the way.
\end{abstract}

\section*{Introduction}
It is shown in~\cite{dyn06} that rectangular diagrams of links (also known as
arc-presentations and grid diagrams) allow one
to solve certain decidability problems in knot theory using one of the most
naive approaches, which is based on monotonic simplification.
Namely, one can decide wether the given rectangular diagram
represents an unknot, a split link, or a composite link
by successively applying all possible sequences of elementary moves not increasing
the number of edges, and check if any of the obtained diagrams
is trivial, split, or composite, respectively. Previously known solutions
of these problems, the first of which are due to W.\,Haken~\cite{haken}
and H.\,Schubert~\cite{schubert}, use much more advanced technique.

Elementary moves involved in the monotonic simplification procedure mentioned
above include only very simple transformations called exchange moves,
stabilizations and destabilizations. There are several reasons to look for
more general families of moves preserving the isotopy class of the link.

One reason is that more general moves might make the monotonic
simplification faster. To this writing, the algorithms based on monotonic
simplification of rectangular diagrams have exponential asymptotic complexity
due to the fact that the simplification is not \emph{strictly} monotonic.

Another reason is a hope that more general moves would allow to solve
more algorithmic problems in the same manner. One of such problems,
which is most natural to consider after the unknotedness, splitness,
and factorization ones, is finding the JSJ-decomposition of the link complement
(solved in~\cite{jt,jlr} with the help of Kneser--Haken normal surfaces). It is also
natural to try extending the monotonic simplification approach to general links.

Finally, studying the combinatorics of more general transformations may
result in new classification results and more efficient estimates for the number
of elementary moves (or Reidemeister moves
for planar diagrams) needed to transform one diagram to another if
they represent isotopic links.

A class of transformations of rectangular diagrams generalizing
elementary moves was introduced in~\cite{dy03}, where the new
transformations were called flypes, since in certain situations they
converted into flypes of the respective planar diagrams. However,
these moves did not help to advance in any of the directions listed above.

In particular, it is shown in~\cite{kaza} that flypes of rectangular diagrams
do not allow to detect satellite knots by means of monotonic simplification.
An example of two rectangular diagrams, which we denote here
by~$R_{\text{\Sun}}$ and~$R_{\text{\Cloud}}$ (with the former modified in an obvious way by
exchange moves),
representing the same satellite knot are provided (see~\cite[Section~7]{kaza}) such
that~$R_{\text{\Cloud}}$  is not `obviously satellite' and admits no complexity preserving flype
changing the combinatorial type of the diagram, whereas~$R_{\text{\Sun}}$
is `obviously satellite'.

Below we introduce a much more general type of moves,
which we call multiflypes because they have been originally thought
of as several flypes performed simultaneously. The main result
of the present paper is a proof that these new moves preserve the isotopy
class of the link. We also use the example from~\cite{kaza} to show
an advantage of the new moves: they allow to proceed from~$R_{\text{\Cloud}}$ to~$R_{\text{\Sun}}$
without increasing the complexity along the way.

\section{Preliminaries}

We denote by~$\mathbb T^2$ the two-dimensional torus~$\mathbb S^1\times\mathbb S^1$, and by~$\theta,\varphi$
the angular coordinates on~$\mathbb T^2$, which run through~$\mathbb S^1=\mathbb R/(2\pi\mathbb Z)$. Denote
by~$p_\theta$ and~$p_\varphi$ the projection maps from~$\mathbb T^2$ to the first and the second~$\mathbb S^1$-factors, respectively. For any~$\theta_0,\varphi_0\in\mathbb S^1$, we put~$m_{\theta_0}=\{\theta_0\}\times\mathbb S^1$,
$\ell_{\varphi_0}=\mathbb S^1\times\{\varphi_0\}$, and call these \emph{a meridian} and \emph{a longitude} of~$\mathbb T^2$,
respectively.

For two distinct points~$x_1,x_2\in\mathbb S^1$ we denote by~$[x_1;x_2]$ (respectively, $(x_1;x_2)$) the closed (respectively, open) interval in
$\mathbb S^1$ starting at~$x_1$ and ending at~$x_2$.

\begin{defi}\label{R-def}
\emph{An oriented rectangular diagram of a link} is a non-empty finite subset~$R\subset\mathbb T^2$
with a decomposition~$R=R^+\sqcup R^-$ into a disjoint union of two subsets~$R^+$ and~$R^-$ such
that we have~$p_\theta(R^+)=p_\theta(R^-)$, $p_\varphi(R^+)=p_\varphi(R^-)$, and each of~$p_\theta$, $p_\varphi$
restricted to each of~$R^+$, $R^-$ is injective.

The elements of~$R$ (respectively, of~$R^+$ or $R^-$)
are called \emph{vertices} (respectively, \emph{positive vertices} or \emph{negative vertices}) of~$R$.

Pairs~$(u,v)$ of vertices of~$R$ such that~$p_\theta(u)=p_\theta(v)$ (respectively, $p_\varphi(u)=p_\varphi(v)$)
are called \emph{vertical} (respectively, \emph{horizontal}) \emph{edges} of~$R$.

All points in
$$\bigl(p_\theta(R)\times p_\varphi(R)\bigr)\setminus R\subset\mathbb T^2$$
are called \emph{crossings} of~$R$.
\end{defi}

With every oriented rectangular diagram of a link~$R$ one associates
a topological oriented link type~$\mathscr L(R)$ as follows.
First, choose a meridian~$m_{\theta_0}$ and a longitude~$\ell_{\varphi_0}$ not passing
through a vertex of~$R$ and cut~$\mathbb T^2$ along~$m_{\theta_0}\cup\ell_{\varphi_0}$
to obtain a square. Then connect, by a straight line segment, every pair
of vertices of~$R$ forming an edge of~$R$. At every intersection point, regard
the vertical arc as overcrossing. The vertical arcs are oriented from a positive
vertex to a negative one, and horizontal arcs from a negative vertex to a positive one.
The obtained oriented planar diagram of a link
represents~$\mathscr L(R)$. An example is shown in Figure~\ref{rd-fig}.
\begin{figure}[ht]
\begin{tabular}{ccc}
\includegraphics[width=150pt]{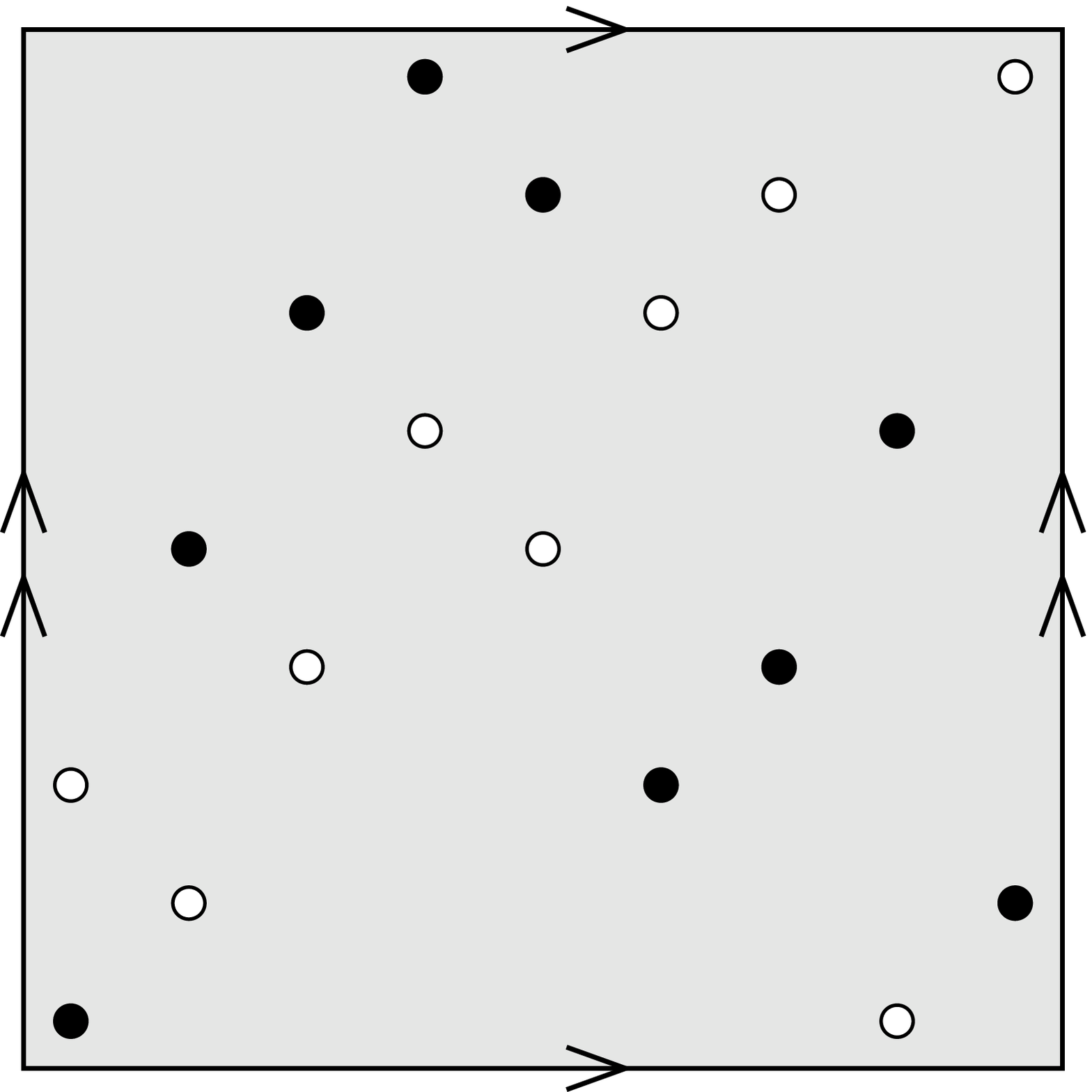}\put(-80,20){$\mathbb T^2$}&\hbox to 2cm{\hss}&\includegraphics[width=150pt]{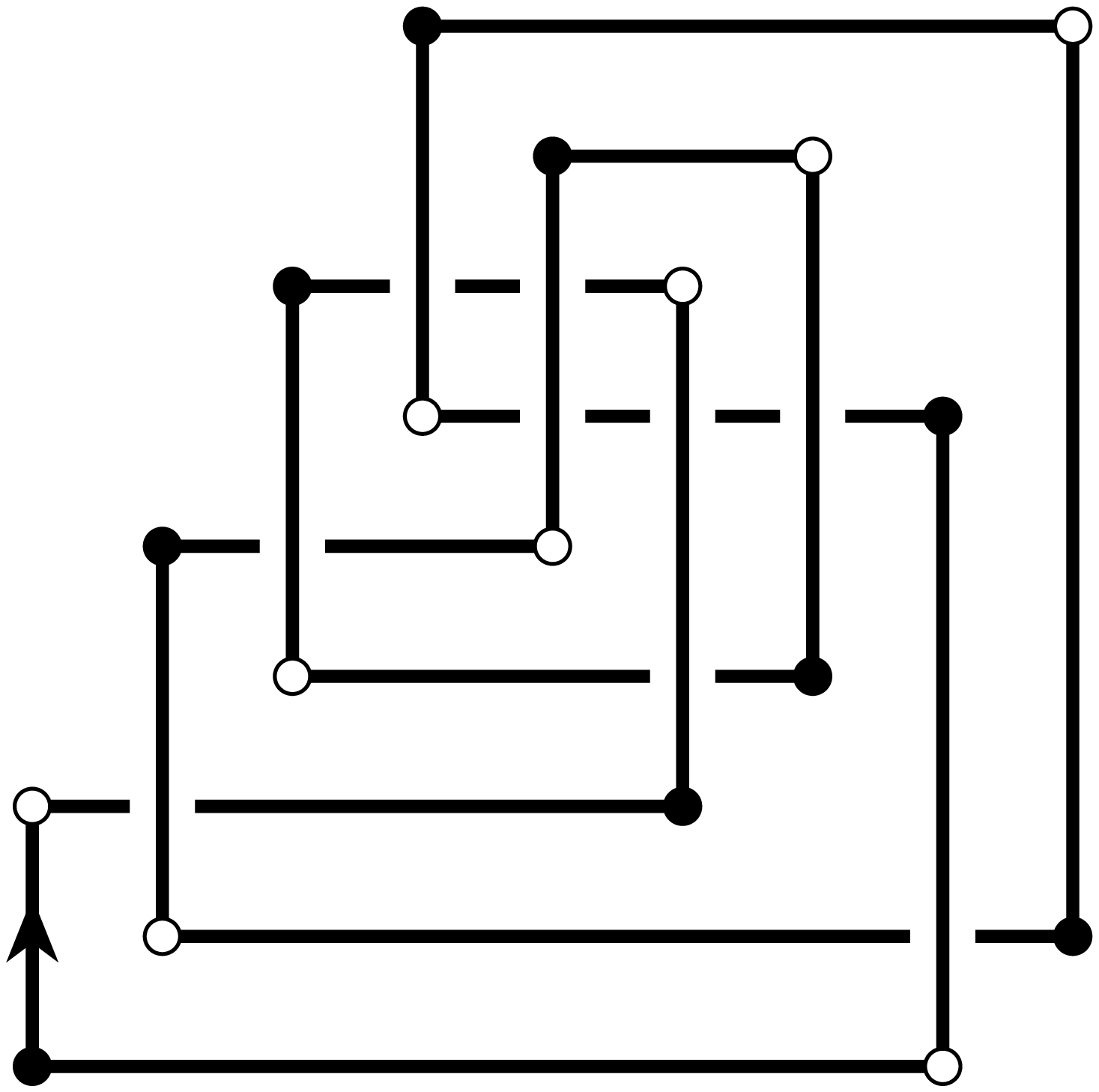}\\
$R$&&a representative of~$\mathscr L(R)$
\end{tabular}
\caption{A rectangular diagram of a link and a planar diagram of the corresponding link}\label{rd-fig}
\end{figure}
Here and below positive vertices are shown in black, and negative vertices in white.

It will be convenient in the sequel to represent any rectangular diagram of a link~$R$
by the following function~$\sigma:\mathbb T^2\rightarrow\{-1,0,1\}$, which will be called
\emph{the characteristic function of~$R$}:
$$\sigma_R(v)=\left\{\begin{aligned}0,&\text{ if }v\notin R,\\
1,&\text{ if }v\in R^+,\\
-1,&\text{ if }v\in R^-.
\end{aligned}\right.$$

By \emph{a rectangle} we mean a subset~$r$ of~$\mathbb T^2$ of the form~$[\theta_1;\theta_2]\times[\varphi_1;\varphi_2]$,
where~$\theta_1,\theta_2,\varphi_1,\varphi_2\in\mathbb S^1$. With every
rectangle~$r=[\theta_1;\theta_2]\times[\varphi_1;\varphi_2]$ we associate \emph{a trivial rectangular
diagram of a link}~$R(r)$ as follows:
$$R(r)^+=\{(\theta_1,\varphi_1),(\theta_2,\varphi_2)\},\quad
R(r)^-=\{(\theta_1,\varphi_2),(\theta_2,\varphi_1)\}.$$
Clearly, $R(r)$ represents an unknot.

For any rectangle~$r$, we denote~$\sigma_{R(r)}$ by~$\sigma_r$ for brevity.

\begin{defi}
Let~$R$ and~$R'$ be oriented rectangular diagrams of links. The passages
from~$R$ to~$R'$ and from~$R'$ to~$R$ are called \emph{elementary moves} if there is a rectangle~$r$ such that:
\begin{enumerate}
\item
$\sigma_R-\sigma_{R'}=\sigma_r$
\item
the intersection~$R\cap r$ consists of exactly one, two, or three successive vertices of~$r$.
\end{enumerate}
\end{defi}

Elementary moves defined in this way include all versions of
exchange moves (also called commutations in the literature), stabilizations
and destabilizations introduced in earlier works~\cite{crom,dyn06},
and also some compositions of these moves with several exchange moves. It is easy to verify that
all elementary moves preserve the isotopy class of the link associated with the diagram.

\section{Definition of a multiflype and the main result}\label{def-sec}
There are four similar versions of multiflypes related with one another by
symmetries~$(\theta,\varphi)\mapsto(-\theta,\varphi)$ and~$(\theta,\varphi)\mapsto(\theta,-\varphi)$.
Each type of multiflypes is assigned an arrow~$\nearrow$, $\nwarrow$, $\swarrow$, or~$\searrow$,
on which the symmetries act accordingly.

Let~$R$ be an oriented rectangular diagram of a link, and let~$A\subset\mathbb T^2$ be
an annulus such that:
\begin{enumerate}
\item
the boundary~$\partial A$ is transverse to all meridians and longitudes, and
the slope of~$\partial A$ is positive, that is, $d\varphi/d\theta>0$ on~$\partial A$;
\item
$\partial A$ misses all crossings of~$R$ (which are defined in Definition~\ref{R-def});
%
%
\item
there is no pair of distinct points~$u,v\in\partial A$ not forming a vertical (respectively,
horizontal) edge of~$R$
but lying on the same meridian (respectively, longitude) and such that~$p_\varphi(u),p_\varphi(v)\in p_\varphi(R)$
(respectively,~$p_\theta(u),p_\theta(v)\in p_\theta(R)$).
\end{enumerate}
%

We denote by~$\partial_1A$ the connected component of~$\partial A$
defined by demanding that a small push off of~$\partial_1A$ in the~$(1,-1)$-direction
lies outside of~$A$. The other connected component is denoted by~$\partial_2A$.

For every point~$v\in A\setminus\partial A$, denote by~$r_v$ a rectangle~$[\theta_1;\theta_2]\times
[\varphi_1;\varphi_2]$ such that~$(\theta_1,\varphi_1)=v$, $(\theta_2,\varphi_1)\in\partial_1A$, and
$(\theta_1,\varphi_2)\in\partial_2A$ (see Figure~\ref{rv-fig}).
\begin{figure}[ht]
\includegraphics[width=150pt]{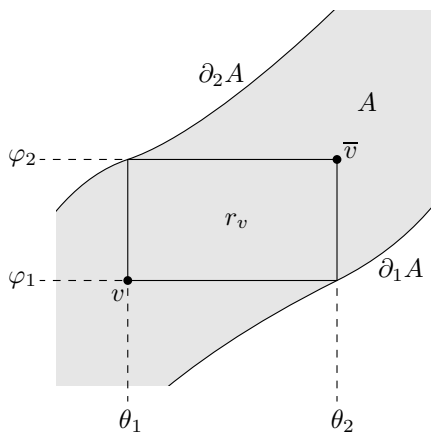}
\put(-30,110){$A$}\put(-22,47){$\partial_1A$}\put(-90,121){$\partial_2A$}
\put(-123,38){$v$}\put(-80,67){$r_v$}\put(-120,-10){$\theta_1$}\put(-40,-10){$\theta_2$}
\put(-162,45){$\varphi_1$}\put(-162,91){$\varphi_2$}\put(-35,92){$\overline v$}
\caption{The rectangle~$r_v$}\label{rv-fig}
\end{figure}
Such a rectangle is clearly unique. Denote by~$\overline v$ the vertex of~$r_v$ opposite to~$v$.

\begin{prop}\label{well-defined-prop}
There exists a \emph(unique\emph) oriented rectangular diagram of a link~$R'$ such
that
\begin{equation}\label{sigma-flype-eq}
\sigma_{R'}=\sigma_R-\sum_{v\in R\cap(A\setminus\partial A)}\sigma_R(v)\sigma_{r_v}.
\end{equation}
\end{prop}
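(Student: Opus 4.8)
The uniqueness is immediate: an oriented rectangular diagram is completely recovered from its characteristic function via $R'^{+}=\{x:\sigma_{R'}(x)=1\}$ and $R'^{-}=\{x:\sigma_{R'}(x)=-1\}$, so at most one $R'$ can satisfy~\eqref{sigma-flype-eq}. The whole content is therefore to show that the function
$$f:=\sigma_R-\sum_{v\in R\cap(A\setminus\partial A)}\sigma_R(v)\,\sigma_{r_v}$$
is the characteristic function of \emph{some} valid diagram. The plan is to first record an elementary criterion: a finitely supported function $\sigma:\mathbb T^2\to\mathbb Z$ equals $\sigma_{R'}$ for a valid oriented rectangular diagram $R'$ if and only if (a) $\sigma$ takes values in $\{-1,0,1\}$, (b) $\sum_{x\in m_\theta}\sigma(x)=0$ and $\sum_{x\in\ell_\varphi}\sigma(x)=0$ for all $\theta,\varphi$, and (c) $\sum_{x\in m_\theta}|\sigma(x)|\le2$ and $\sum_{x\in\ell_\varphi}|\sigma(x)|\le2$ for all $\theta,\varphi$. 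Indeed (a)--(c) force the nonzero values on each meridian and each longitude to be exactly one $+1$ and one $-1$, which is precisely Definition~\ref{R-def}.

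Condition~(b) is automatic and costs nothing: each $r_v$ is a rectangle, so $\sigma_{r_v}=\sigma_{R(r_v)}$ is itself a valid characteristic function and hence sums to zero along every meridian and every longitude. Since $\sigma_R$ satisfies (b) as well, so does $f$. Thus the entire difficulty collapses to the one-dimensional statements (a) and (c), to be checked on each meridian (the case of longitudes being completely analogous). I would fix a meridian $m_{\theta_0}$ and use the positive-slope hypothesis~(1) to describe the annular arc $A\cap m_{\theta_0}$ by its two endpoints, one on each of $\partial_1A,\partial_2A$, and to pin down, for every vertex $v$ of $R$ inside $A$, the position of $r_v$ relative to $m_{\theta_0}$. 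The rectangles contributing to $f$ on $m_{\theta_0}$ are exactly those whose left side lies on $m_{\theta_0}$ (the $v\in R\cap A$ with $p_\theta(v)=\theta_0$) and those whose right side lies on $m_{\theta_0}$ (the $v\in R\cap A$ with $(\theta_0,p_\varphi(v))\in\partial_1A$); by injectivity of $p_\varphi$ on $R^{\pm}$ and condition~(3), each family contains at most one positive and one negative vertex.

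I would then tabulate the contributions height by height and show that they telescope: the value that $\sigma_R$ carries at each interior vertex $v$ is exactly cancelled, a vertex of the opposite sign is created at the opposite corner $\overline v$, and the values deposited at the two points $\partial_1A\cap m_{\theta_0}$ and $\partial_2A\cap m_{\theta_0}$ combine with the images of $\overline v$ to at most one $+1$ and one $-1$. Together with the vertices of $R$ on $m_{\theta_0}$ lying outside $A$, which survive unchanged, this gives $f|_{m_{\theta_0}}\in\{-1,0,1\}$ with at most one $+1$ and one $-1$, establishing (a) and (c) on $m_{\theta_0}$ and, by the analogous longitudinal computation, everywhere.

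The main obstacle is precisely this bookkeeping of coincidences. A priori a single meridian can receive contributions from up to two interior vertices lying on it, up to two vertices whose rectangles close off on it, one deposit at each of its two boundary crossings, and several relocated corners $\overline v$ arriving from other meridians; so the naive count of nonzero entries exceeds two, and the hard part will be to prove that the required cancellations always occur and that no single point is ever covered twice with the same sign, which would produce a forbidden value $\pm2$. This is exactly the role of conditions~(2) and~(3): (2) keeps $\partial A$ off the crossings, so corners deposited on the boundary sit only where they can be matched, and (3) forbids the unwanted pairs of boundary points sharing a meridian or longitude over occupied coordinates, which is what guarantees that the boundary deposits pair up and cancel rather than accumulate. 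A secondary technical point is that an essential annulus of positive slope may wind around $\mathbb T^2$, so the functions describing $\partial_1A$ and $\partial_2A$ are only piecewise monotone; I would handle this by cutting $\mathbb T^2$ along a meridian and a longitude disjoint from all the relevant data and carrying out the telescoping in the resulting square, where every interval $[\theta_1;\theta_2]$ and $[\varphi_1;\varphi_2]$ becomes a genuine monotone segment.
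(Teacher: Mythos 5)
Your skeleton coincides with the paper's own proof: uniqueness is immediate from the characteristic function, the zero-sum condition along every meridian and longitude comes for free because each $\sigma_{r_v}$ is itself a characteristic function of a (trivial) diagram, and everything reduces to showing that the right-hand side of~\eqref{sigma-flype-eq} takes values in~$\{-1,0,1\}$ with at most two nonzero values on each meridian and each longitude; inside the open annulus the move just replaces each vertex~$v$ by~$\overline v$ with the opposite sign, via the bijection~$v\mapsto\overline v$. The difference is that the paper then supplies the mechanism that your write-up defers. It organizes the boundary contributions not meridian by meridian but by maximal horizontal (respectively, vertical) arcs of~$A$: every vertex of~$R$ lying on the maximal arc~$[\theta_1;\theta_0]\times\{\varphi_0\}\subset A$ has its rectangle~$r_v$ ending at the \emph{same} corner~$(\theta_0,\varphi_0)\in\partial_1A$, and by injectivity of~$p_\varphi$ on~$R^\pm$ there are at most two such vertices, necessarily of opposite sign when there are two (they form a horizontal edge). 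This yields a clean local rule at each boundary point (no change, vertex removed, or vertex added, according to the count on the arc), so values stay in~$\{-1,0,1\}$ there without any cross-meridian case analysis; the only remaining failure mode is four or more nonzero points on a single meridian or longitude, which is exactly what conditions~(2) and~(3) exclude (a step the paper itself compresses into ``one can see'').

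The genuine gap is that your decisive claim is announced rather than proved: you write that ``the hard part will be to prove that the required cancellations always occur and that no single point is ever covered twice with the same sign'' --- but that \emph{is} the proposition, and the tabulation meant to deliver it is never carried out. Moreover, the setup you propose for it is flawed as stated. Since~$\partial A$ has positive slope, a boundary component of homology class~$(p,q)$ with~$p\ge2$ meets every meridian in~$p$ points, so~$A\cap m_{\theta_0}$ is in general a union of several arcs and ``the two points $\partial_1A\cap m_{\theta_0}$ and $\partial_2A\cap m_{\theta_0}$'' need not be two points; your proposed repair --- cutting~$\mathbb T^2$ along a meridian and a longitude --- does not help, because cutting does not prevent the annulus, or the intervals defining the rectangles~$r_v$, from crossing the cut, so the boundary curves still fail to be graphs over either coordinate. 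A smaller inaccuracy: your membership test for the second family is only necessary, not sufficient --- $(\theta_0,p_\varphi(v))\in\partial_1A$ does not guarantee that this point is the endpoint of the maximal horizontal arc through~$v$, i.e.\ the actual corner of~$r_v$. The paper's per-arc bookkeeping sidesteps all of this; your per-meridian organization could probably be pushed through, but as written the argument stops exactly at the step the proposition asserts.
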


\begin{proof}
We give a geometric interpretation of~\eqref{sigma-flype-eq} from
which it is clear that~$R'$ is a well defined oriented rectangular diagram of a link.

First, note that, on any meridian~$m_{\theta_0}$ and on any longitude~$\ell_{\varphi_0}$,
the right hand side of~\eqref{sigma-flype-eq} sum up to zero, since so does
each summand in it. So, it suffices to verify that the right hand side of~\eqref{sigma-flype-eq}
takes only values in~$\{-1,0,1\}$, and on every meridian and longitude, it takes non-zero
values at at most two points.

The map~$v\mapsto\overline v$ is clearly a bijection from~$A\setminus\partial A$ to itself.
If~$v\in R\cap(A\setminus\partial A)$, then the subtraction of~$\sigma_R(v)\sigma_{r_v}$ from~$\sigma_R$, geometrically,
results in removing~$v$ from the diagram and adding~$\overline v$ with the opposite sign. So,
inside the domain~$A\setminus\partial A$, the geometric meaning of~\eqref{sigma-flype-eq}
is the replacement of every vertex~$v$ in~$R\cap(A\setminus\partial A)$
by the respective vertex~$\overline v$ having the opposite sign.

Some vertices are also removed or added at~$\partial A$, and the rule defined by~\eqref{sigma-flype-eq}
is as follows. Let~$(\theta_0,\varphi_0)\in\partial_1A$, and let~$[\theta_1;\theta_0]\times\{\varphi_0\}$
be a maximal horizontal arc contained in~$A$. If there are two or no vertices of~$R$
in the open arc~$(\theta_1;\theta_0)\times\{\varphi_0\}$, then no change of the diagram occurs at~$(\theta_0,\varphi_0)$.
If this arc contains exactly one vertex and~$(\theta_0,\varphi_0)$ is also a vertex of~$R$,
then this vertex is removed. Otherwise, a vertex is added at~$(\theta_0,\varphi_0)$.

The change of the diagram at any~$(\theta_0,\varphi_0)\in\partial_2A$ depends similarly
on the number of vertices of~$R$ in~$\{\theta_0\}\times(\varphi_1;\varphi_0)$,
where~$\{\theta_0\}\times[\varphi_1;\varphi_0]$ is a maximal vertical arc contained in~$A$.

Thus, the only way in which the right hand side of~\eqref{sigma-flype-eq} may fail to be the characteristic
function of an oriented rectangular diagram is that it takes non-zero values at four or more
points contained in a single meridian or longitude. One can see that the conditions
imposed on the choice of~$A$ guarantee that this does not happen.
\end{proof}

The passage from~$R$ to~$R'$ defined by~\eqref{sigma-flype-eq} is called \emph{a $\nearrow$-multiflype} (\emph{based on~$A$}).
The other types of multiflypes are defined as follows:

\centerline{\begin{tabular}{cl}
$s_\updownline(R)\mapsto s_\updownline(R')$&is a $\nwarrow$-multiflype,\\
$s_\leftrightline(R)\mapsto s_\leftrightline(R')$& is a $\searrow$-multiflype,\\
$(s_\updownline\circ s_\leftrightline)(R)\mapsto(s_\updownline\circ s_\leftrightline)(R')$
& is a $\swarrow$-multiflype,
\end{tabular}}
where
$$s_\updownline(\theta,\varphi)=(-\theta,\varphi),\quad s_\leftrightline(\theta,\varphi)=(\theta,-\varphi).$$

The proof of the following two statements is easy and left to the reader.

\begin{prop}
The inverse of a $\nearrow$-multiflype \emph(respectively, $\nwarrow$-multiflype\emph)
is a $\swarrow$-multiflype \emph(respectively, $\searrow$-multiflype\emph).
\end{prop}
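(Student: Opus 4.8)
The plan is to derive everything from a single fact about the central involution
$s=s_\updownline\circ s_\leftrightline\colon(\theta,\varphi)\mapsto(-\theta,-\varphi)$. By the definition of the four types, the $\swarrow$-multiflype associated with a $\nearrow$-multiflype $R\to R'$ is precisely the passage $s(R)\to s(R')$. Hence, to show that the inverse $R'\to R$ of a $\nearrow$-multiflype based on $A$ is a $\swarrow$-multiflype, it suffices to prove the following claim: \emph{if $R\to R'$ is a $\nearrow$-multiflype based on $A$, then $s(R')\to s(R)$ is a $\nearrow$-multiflype based on $s(A)$.} Indeed, granting the claim and applying the involution $s$ to both sides of the $\nearrow$-multiflype $s(R')\to s(R)$ yields, by definition, the $\swarrow$-multiflype $s(s(R'))\to s(s(R))$, which is $R'\to R$.

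To prove the claim I would first check that $s(A)$ is an admissible base annulus and locate its boundary components. The differential of $s$ is $-\operatorname{id}$, so $s$ preserves the sign of $d\varphi/d\theta$ and $s(A)$ again has positive slope, giving condition~(1). Since $s$ sends the direction $(1,-1)$ to its opposite, a small push-off of $s(\partial_1A)$ in the $(1,-1)$-direction lies \emph{inside} $s(A)$; comparing with the defining property of $\partial_1$, this shows that $s$ interchanges the two boundary components, $s(\partial_1A)=\partial_2\bigl(s(A)\bigr)$ and $s(\partial_2A)=\partial_1\bigl(s(A)\bigr)$. This component bookkeeping is the structural core of the argument.

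Next I would match the rectangles. For a vertex $v=(\theta_1,\varphi_1)$ moved by the original multiflype we have $r_v=[\theta_1;\theta_2]\times[\varphi_1;\varphi_2]$ with $(\theta_2,\varphi_1)\in\partial_1A$, $(\theta_1,\varphi_2)\in\partial_2A$, and $\overline v=(\theta_2,\varphi_2)$. A direct computation gives $s(r_v)=[-\theta_2;-\theta_1]\times[-\varphi_2;-\varphi_1]$, whose ``lower-left'' corner is $s(\overline v)$; its corner on $\partial_1\bigl(s(A)\bigr)$ is $s\bigl((\theta_1,\varphi_2)\bigr)$ and its corner on $\partial_2\bigl(s(A)\bigr)$ is $s\bigl((\theta_2,\varphi_1)\bigr)$, exactly as demanded by the recipe defining the rectangle based at $s(\overline v)$ for the multiflype on $s(A)$. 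Thus that rectangle equals $s(r_v)$ and its opposite corner is $s(v)$. Since $\overline v\in R'$ whenever $v\in R\cap(A\setminus\partial A)$, and $R'\cap(A\setminus\partial A)=\{\overline v\}$ by the bijection $v\mapsto\overline v$, the $\nearrow$-multiflype based on $s(A)$ sends the interior vertex $s(\overline v)\in s(R')$ to $s(v)\in s(R)$, with the sign reversal in~\eqref{sigma-flype-eq} compatible with the sign convention transported by $s$. The analogous, and simpler, parity bookkeeping at $\partial A$ described in the proof of Proposition~\ref{well-defined-prop} is carried over by $s$ in the same way, so formula~\eqref{sigma-flype-eq} for $s(A)$ applied to $s(R')$ returns exactly $s(R)$.

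The one step I expect to be a genuine obstacle is verifying conditions~(2) and~(3) for $s(A)$ relative to $s(R')$ — equivalently, since $s$ preserves crossings, edges and the projections up to the circle-involution, verifying that the original annulus $A$ is admissible for $R'$ and not merely for $R$. Because a multiflype introduces new vertices on $\partial A$ and at the points $\overline v$, the occupied sets $p_\theta(R'),p_\varphi(R')$ differ from $p_\theta(R),p_\varphi(R)$, so conditions~(2),(3) must be re-examined against the modified grid; the content is to see that these conditions were arranged to hold simultaneously for $R$ and $R'$, which follows by tracking $\partial A$ against the finitely many meridians and longitudes that change. Granting this, the claim and hence the first statement are established. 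The second statement then follows formally by conjugating with $s_\leftrightline$: since $s_\leftrightline\circ s_\updownline\circ s_\leftrightline=s_\updownline$ and $s_\leftrightline\circ s=s_\updownline$, the inverse $s_\updownline(R')\to s_\updownline(R)$ of a $\nwarrow$-multiflype equals $s_\leftrightline(S)\to s_\leftrightline(S')$ with $S=s(R')$, $S'=s(R)$ and $S\to S'$ a $\nearrow$-multiflype, which is by definition a $\searrow$-multiflype.
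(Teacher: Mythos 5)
Since the paper declares this proof ``easy and left to the reader,'' there is no official argument to compare against; your overall route is surely the intended one, and most of it is correct: the reduction of the first statement to the claim that $s(R')\to s(R)$ is a $\nearrow$-multiflype based on $s(A)$, where $s=s_\updownline\circ s_\leftrightline$, the bookkeeping $s(\partial_1A)=\partial_2\bigl(s(A)\bigr)$, the identification of $s(r_v)$ as the rectangle of the flype on $s(A)$ based at $s(\overline v)$ with opposite corner $s(v)$, the computation that the mirrored instance of~\eqref{sigma-flype-eq} returns $s(R)$ (using $\sigma_{s(X)}=\sigma_X\circ s$ and $\sigma_{s(r)}=\sigma_r\circ s$, both valid for the rotation $s$ though not for $s_\updownline$ or $s_\leftrightline$ separately), and the conjugation by $s_\leftrightline$ settling the $\nwarrow$/$\searrow$ case. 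The genuine gap is exactly at the step you flagged, and your proposed resolution of it is wrong: it is \emph{not} true in general that conditions (2) and (3) ``were arranged to hold simultaneously for $R$ and $R'$.'' Condition (3) can fail for the pair $(R',A)$ while holding for $(R,A)$, because the multiflype occupies new meridians and longitudes. Concretely, let $v$ be the only vertex of $R$ in the interior of an annulus $A$ whose boundary components have class $(1,1)$, with $\varphi_2:=p_\varphi(\overline v)\notin p_\varphi(R)$. Let $z_1$ be the intersection point of $\ell_{\varphi_2}$ with $\partial_1A$ and let $w$ be the intersection of $\partial_2A$ with the meridian through $z_1$. Nothing in conditions (1)--(3) for $(R,A)$ prevents $p_\varphi(w)\in p_\varphi(R)$: since $\varphi_2\notin p_\varphi(R)$ the pair $(z_1,w)$ is unconstrained by condition (3) for $R$, and one can keep $p_\theta(z_1)\notin p_\theta(R)$ so that $w$ is not a crossing of $R$. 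After the move, however, $\varphi_2\in p_\varphi(R')$, so $z_1$ and $w$ lie on a common meridian, satisfy $p_\varphi(z_1),p_\varphi(w)\in p_\varphi(R')$, and do not form a vertical edge of $R'$ (indeed $z_1\notin R'$): condition (3) fails for $(R',A)$, so the inverse cannot be based on $A$ verbatim, contrary to your assertion.

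The proposition is nevertheless true, and the correct repair is the device the paper itself uses in Case~2 of the induction step in Section~\ref{proof-sec}: replace $A$ by a small perturbation $A'$ of $\partial A$ near the finitely many offending intersection points. Perturbing $\partial_1A$ near $z_1$ keeps (by transversality) an intersection with the occupied longitude $\ell_{\varphi_2}$ but moves it off the meridian of $w$, killing the violation; since $z_1$ is neither a corner $a_u$, $b_u$ of a rectangle attached to an interior vertex of $R$ or of $R'$ (for $R'$ the only such corners here are $a_v$ and $b_v$), nor a point where the boundary vertex-creation rule of Proposition~\ref{well-defined-prop} is sensitive to the meridian coordinate, the perturbed annulus defines the same transformation in both directions, i.e.\ $R\to R'$ and $R'\to R$ are unchanged. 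With this genericity step made explicit --- ``$A$ admissible for $R$'' must be weakened to ``some perturbation of $A$, inducing the same multiflype, is admissible for $R'$'' --- your argument becomes complete.
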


\begin{prop}
Elementary moves of oriented rectangular diagrams of links are exactly multiflypes such that the interior of
the respective annulus~$A$ contains exactly one vertex of the diagram.
\end{prop}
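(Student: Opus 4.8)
The plan is to prove the equivalence by a direct computation with the characteristic functions, unwinding both the definition of an elementary move and the definition of a $\nearrow$-multiflype (the other three cases following by the symmetries $s_\updownline,s_\leftrightline$). I would first establish the easy direction: suppose the annulus $A$ contains exactly one vertex $v$ of $R$ in its interior $A\setminus\partial A$. Then the sum in~\eqref{sigma-flype-eq} reduces to the single term $\sigma_R(v)\sigma_{r_v}$, so $\sigma_R-\sigma_{R'}=\sigma_R(v)\sigma_{r_v}$. Since $\sigma_{r_v}=-\sigma_{r_v'}$ where $r_v'$ denotes $r_v$ with its positive and negative vertex pairs interchanged (equivalently $\sigma_{r}=\sigma_{r'}$ up to the sign convention of the trivial diagram), and since $\sigma_R(v)\in\{-1,1\}$, the difference $\sigma_R-\sigma_{R'}$ equals $\pm\sigma_{r_v}=\sigma_{r}$ for the rectangle $r$ equal to $r_v$ with vertex parities adjusted. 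This verifies condition~(1) of the definition of an elementary move. For condition~(2), I would argue that the constraints~(1)--(3) imposed on $A$ force $R\cap r_v$ to consist of one, two, or three successive vertices of $r_v$: the slope and crossing-avoidance conditions on $\partial A$ prevent $R$ from meeting the two sides of $r_v$ lying on $\partial A$ except at the corners $\overline v$ and its neighbors, so the only vertices of $R$ on $r_v$ are among its four corners, and the vertex $v$ itself together with the geometry rules out the degenerate case of all four.

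For the converse, I would start from an arbitrary elementary move with associated rectangle $r=[\theta_1;\theta_2]\times[\varphi_1;\varphi_2]$ satisfying $\sigma_R-\sigma_{R'}=\sigma_r$ and $R\cap r$ consisting of one, two, or three successive vertices of $r$, and construct an annulus $A$ whose interior meets $R$ in exactly one vertex and whose multiflype reproduces this move. The natural choice is to let $v$ be the unique vertex of $r$ at which $\sigma_R(v)=-\sigma_r(v)\ne 0$ (i.e.\ the corner that is being removed), then take $A$ to be a thin positive-slope annulus containing $v$ and the opposite corner $\overline v$ in its interior, with $\partial_1A$ passing just beyond the $\varphi_1$-side and $\partial_2A$ just beyond the $\varphi_2$-side of $r$. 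I would check that such an $A$ can be chosen thin enough to satisfy conditions~(1)--(3), that its interior contains no vertex of $R$ other than $v$, and that the rectangle $r_v$ determined by $v$ and $A$ coincides with $r$ up to the sign convention, so that~\eqref{sigma-flype-eq} collapses to exactly the elementary-move formula $\sigma_{R'}=\sigma_R-\sigma_R(v)\sigma_{r_v}$.

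The main obstacle I expect is the converse direction, specifically the careful construction of the annulus $A$ that realizes a given elementary move while meeting all three technical conditions on $\partial A$, and simultaneously containing precisely one vertex of $R$ in its interior. The difficulty is combinatorial bookkeeping: the cases where $R\cap r$ has one, two, or three successive vertices differ in which corners of $r$ are occupied, and I must verify in each case that the positive-slope boundary can be routed through the complement of $R$ so that condition~(3)—the prohibition on two boundary points sharing a meridian or longitude with $\varphi$- or $\theta$-coordinates in $p_\varphi(R)$ or $p_\theta(R)$—is not violated. Once the annulus is correctly positioned, the remaining verifications are routine: the geometric interpretation of~\eqref{sigma-flype-eq} given in the proof of Proposition~\ref{well-defined-prop}, namely that a single interior vertex $v$ is replaced by $\overline v$ with the opposite sign (with no change at $\partial A$ when the maximal arcs contain the appropriate counts of vertices), matches exactly the effect of the elementary move, completing the identification in both directions.
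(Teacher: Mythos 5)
The paper offers no proof of this proposition (it is explicitly ``left to the reader''), so your attempt can only be judged on its own merits, and it has a genuine gap in both directions, rooted in the same sign error. Your vertex-selection rule for the converse is broken: at a corner $v$ of $r$ lying in $R$ one necessarily has $\sigma_R(v)=+\sigma_r(v)$ (otherwise $\sigma_{R'}(v)=\sigma_R(v)-\sigma_r(v)=\pm2$, impossible), so a corner with $\sigma_R(v)=-\sigma_r(v)\ne0$ does not exist; and the removed corners are not unique --- an exchange move removes two, a destabilization three. The same sign confusion undermines your forward direction: when the unique interior vertex $v$ has $\sigma_R(v)=-1$, you get $\sigma_R-\sigma_{R'}=-\sigma_{r_v}$, and there is no ``parity-adjusted'' rectangle with underlying set $r_v$ whose characteristic function is $-\sigma_{r_v}$; the signs of $\sigma_r$ are dictated by the rectangle, and the only rectangles with $\sigma=-\sigma_{r_v}$ are the complementary wrap-around ones, for which condition~(2) fails in general. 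The correct fix here is cheap: the definition declares \emph{both} passages elementary, so one verifies $\sigma_{R'}-\sigma_R=\sigma_{r_v}$ and checks condition~(2) for $R'\cap r_v$, which contains $\overline v$ and at most the two corners of $r_v$ on $\partial A$.

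The deeper failure is that your converse uses only positive-slope annuli, and this cannot work. For a $\nearrow$-multiflype the interior vertex $v$ sits at the \emph{bottom-left} corner of $r_v$, so $r_v=r$ forces $v$ to be the bottom-left corner of $r$; dually, $\swarrow$ handles the top-right corner. The correct recipe is: choose an occupied corner whose \emph{opposite} corner is unoccupied (such a corner always exists --- the single one, either of two adjacent ones, or the middle of three --- and the opposite-corner condition is needed because otherwise $\overline v$ would be a second interior vertex), and its position then dictates the \emph{type} of multiflype. When that corner is the bottom-right or top-left one --- e.g.\ a stabilization with $R\cap r=\{(\theta_2,\varphi_1)\}$, or a destabilization with $R\cap r$ the three corners whose middle is $(\theta_2,\varphi_1)$ --- no positive-slope annulus realizes the move, and you must use the negative-slope $\nwarrow$- or $\searrow$-multiflypes. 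This is exactly consistent with the paper's concluding remarks: $\diagup$-multiflypes give only type~I (de)stabilizations, type~II requiring $\diagdown$; your construction would wrongly claim every (de)stabilization is a $\diagup$-multiflype. Two smaller corrections: $\partial_1A$ must pass \emph{through} $(\theta_2,\varphi_1)$ and $\partial_2A$ through $(\theta_1,\varphi_2)$ (not ``just beyond'' the sides of $r$), since otherwise $r_v\ne r$ and the multiflype performs a different move; and $r_v$ has no sides lying on $\partial A$ --- it meets $\partial A$ only at those two opposite corners, which is what makes the count of successive vertices come out right.
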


The following theorem is the main result of this paper.

\begin{theo}\label{main-th}
If~$R\mapsto R'$ is a multiflype, then~$\mathscr L(R)=\mathscr L(R')$.
\end{theo}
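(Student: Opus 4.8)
My plan is to realize each link $\mathscr L(R)$ as a genuine link in $\mathbb S^3$ and to produce $\mathscr L(R')$ as its image under an ambient isotopy supported near the annulus $A$. I would embed $\mathbb T^2$ in $\mathbb S^3$ as the Heegaard torus separating two solid tori $V_\theta$ and $V_\varphi$ whose cores are unknotted circles $c_\theta,c_\varphi$, arranged so that the meridians $m_{\theta_0}$ bound disks in $V_\theta$ and the longitudes $\ell_{\varphi_0}$ bound disks in $V_\varphi$. Then $\mathscr L(R)$ is obtained from the vertices of $R$ by joining each vertical edge by an arc pushed into $V_\theta$ and each horizontal edge by an arc pushed into $V_\varphi$, with the understanding that $V_\theta$ is the solid torus containing the overcrossing arcs, matching the convention that vertical arcs overcross. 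The goal is to carry this configuration to the one built from $R'$ by an isotopy of $\mathbb S^3$.

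The heart of the argument is that the rule $v\mapsto\overline v$ together with the change of sign in~\eqref{sigma-flype-eq} is realized by a half-turn of a solid-torus neighbourhood of $A$. By condition~(1), $\partial A$ is a pair of parallel curves of positive slope, so a closed regular neighbourhood $N(A)$ of $\overline A$ in $\mathbb S^3$ is a solid torus whose core is parallel to $\partial A$, and $\mathbb T^2$ cuts $N(A)$ into the two halves $N(A)\cap V_\theta$ and $N(A)\cap V_\varphi$. As $v$ ranges over $A\setminus\partial A$, the map $v\mapsto\overline v$ interchanges $\partial_1A$ and $\partial_2A$ (a point near $\partial_1A$ is sent near $\partial_2A$, and conversely), while the sign change corresponds to interchanging the sides $V_\theta$ and $V_\varphi$. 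I would verify that these two interchanges are precisely the effect, on $\overline A$ and on the normal direction, of the rotation of $N(A)$ by $\pi$ about its core circle. Under this rotation an arc pushed into $V_\theta$ along a vertical edge goes to an arc pushed into $V_\varphi$ running in the new horizontal direction, and conversely; hence every crossing inside $A$ is carried to a crossing of the opposite combinatorial type while the over/undercrossing convention is still respected, which is exactly the local picture of $\mathscr L(R')$.

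To make this half-turn an honest ambient isotopy I would perform it with an angle that tapers from $\pi$ on the core of $N(A)$ to $0$ on $\partial N(A)$. Such a map is the identity on $\partial N(A)$ and is isotopic to the identity rel $\partial N(A)$ (scale the angle linearly from $0$), so it extends by the identity to an ambient isotopy of all of $\mathbb S^3$, independently of how the possibly knotted solid torus $N(A)$ is embedded. Outside $\overline A$ one has $\sigma_{R'}=\sigma_R$, so the two links already coincide there, and all the creation and deletion of vertices dictated by~\eqref{sigma-flype-eq} happens over $\partial A\subset N(A)$, where it must be matched against the boundary behaviour recorded in the proof of Proposition~\ref{well-defined-prop}. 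Since the four arrow-types are exchanged by the symmetries $(\theta,\varphi)\mapsto(-\theta,\varphi)$ and $(\theta,\varphi)\mapsto(\theta,-\varphi)$, it suffices to treat the $\nearrow$-case.

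I expect the main obstacle to be the bookkeeping along $\partial A$ combined with checking that the moving link stays embedded throughout the rotation, and this is exactly what conditions~(2) and~(3) are designed to ensure: condition~(2) keeps $\partial A$ off the crossings so that $N(A)$ can be taken thin, and condition~(3) forbids two strands that do not already form an edge from being forced onto a common meridian or longitude, which is precisely what would otherwise create an illegal coincidence of two vertices, or a forbidden double point, as the interior vertices sweep from $v$ to $\overline v$. The most delicate point will be confirming, vertex by vertex and with the correct signs, that the half-turn reproduces the three boundary cases listed after Proposition~\ref{well-defined-prop} (no change, deletion, creation) and that no spurious intersections with the axes $c_\theta,c_\varphi$ or among the swept arcs appear; once this local analysis is secured, patching the tapered rotation with the identity outside $N(A)$ completes the proof that $\mathscr L(R)=\mathscr L(R')$.
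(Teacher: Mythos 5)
Your strategy---realizing the multiflype as a tapered half-turn of a solid torus neighbourhood $N(A)$ of the annulus---is genuinely different from the paper's proof, which is purely combinatorial: the paper decomposes the multiflype into a sequence of elementary moves by induction on $|R\cap(\Omega_{u_0}\setminus\partial A)|$, with Lemma~\ref{ind-step-lem} supplying the induction step, and then uses the known fact that elementary moves preserve $\mathscr L$. Your picture is very likely the motivating heuristic behind the definition, but as written it has a gap at its central step, and the one identification you do commit to is false as stated. The rotation of $N(A)$ by $\pi$ about its core fixes the core circle of $A$ pointwise and restricts to $A$ as an involution; by contrast, the map $v\mapsto\overline v$ is fixed-point free and is \emph{not} an involution: $\overline{\overline v}$ lies strictly up and to the right of $v$ (for the slope-one annulus $a\le\varphi-\theta\le b$ one computes $\overline v=(\varphi-a,\theta+b)$, whose square is the diagonal shift by $b-a\not\equiv 0$). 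Combinatorially $v\mapsto\overline v$ is a \emph{glide} reflection of $A$, so the correct model is a screw motion---the half-turn composed with a half-period slide along the core (suitably tapered, this is still isotopic to the identity rel $\partial N(A)$, so your extension argument survives). With the pure rotation, the ``vertex by vertex'' check you promise would fail: its image is of course isotopic to $\mathscr L(R)$ (any map isotopic to the identity gives that), but it is not the realization of the diagram $R'$ prescribed by~\eqref{sigma-flype-eq}, so no conclusion about $\mathscr L(R')$ follows. Moreover, when $\partial A$ wraps more than once around one of the $\mathbb S^1$-factors, even the screw motion does not carry the meridian direction on $A$ to the longitude direction, so a further straightening isotopy inside $N(A)$ is needed before a rectangular diagram can be read off the image at all.

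After these repairs, everything reduces to the claim you explicitly defer: that the image of the tangle $\bigl(N(A),\,N(A)\cap\mathscr L(R)\bigr)$ under the corrected tapered motion is, rel $\partial N(A)$, the tangle determined by $R'$---including the creation/deletion rules at $\partial A$ recorded in the proof of Proposition~\ref{well-defined-prop}, where conditions (2) and (3) on $A$ do their work, and including the arcs that cross $A$ several times without having endpoints there (a single meridian can meet $A$ in several components). That verification is not bookkeeping around a finished argument; it is where the entire content of the theorem sits, so the proposal is at present a plausible, and I believe completable, outline rather than a proof. Note also what the paper's combinatorial route buys that an ambient-isotopy proof would not: an explicit algorithm decomposing any multiflype into elementary moves which are themselves $\diagup$- (or $\diagdown$-) multiflypes, which is exactly what is needed for the algorithmic remarks and for Corollary~\ref{leg-coro} on Legendrian link types.
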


The proof will be given in Section~\ref{proof-sec}.

\section{An example}
Shown at the top of Figure~\ref{exam-fig} is an oriented rectangular diagram~$R_{\text{\Sun}}$ of a link
which is a satellite knot. Namely, it is a 2-cable of the trefoil knot, and the
satellite structure is clearly visible from the diagram.

The diagram~$R_{\text{\Cloud}}$ in the middle row represents the same knot, but
it is already non-trivial to detect the satellite structure
from this diagram. It is easy to see that
no combinatorially non-trivial and complexity-preserving elementary move
can be applied to~$R_{\text{\Cloud}}$. Moreover, it is shown in~\cite{kaza}
that the combinatorial structure of~$R_{\text{\Cloud}}$ cannot be changed
by more general moves called flypes in~\cite{dy03}, without
introducing more edges. Thus, with only flypes at hand,
the monotonic simplification method does fails at detecting
the satellite structure of this knot from the diagram~$R_{\text{\Cloud}}$.

This detection becomes possible with the help of multiflypes. The diagram~$R_{\text{\SunCloud}}$
in Figure~\ref{exam-fig}, if viewed combinatorially, is obtained
from~$R_{\text{\Cloud}}$ by a single multiflype preserving the number of edges. This is demonstrated in
the bottom row of Figure~\ref{exam-fig}, where the respective annulus~$A$
is shown as a shaded region, and all involved rectangles
of the form~$r_v$ are also indicated.
\begin{figure}[ht]
\includegraphics[width=150pt]{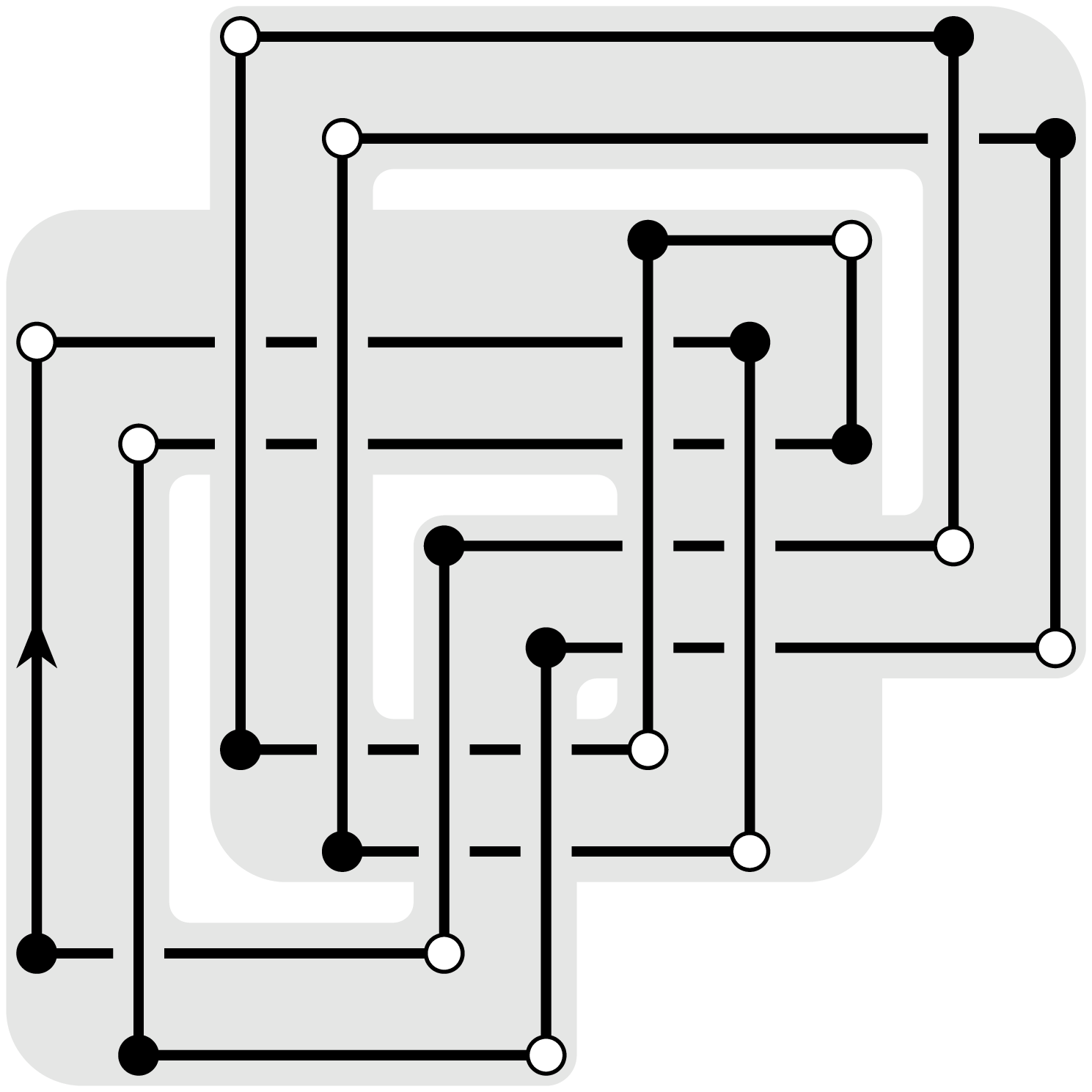}

\vskip-8mm
$$R_{\text{\Sun}}$$

\begin{tabular}{ccc}
\includegraphics[width=150pt]{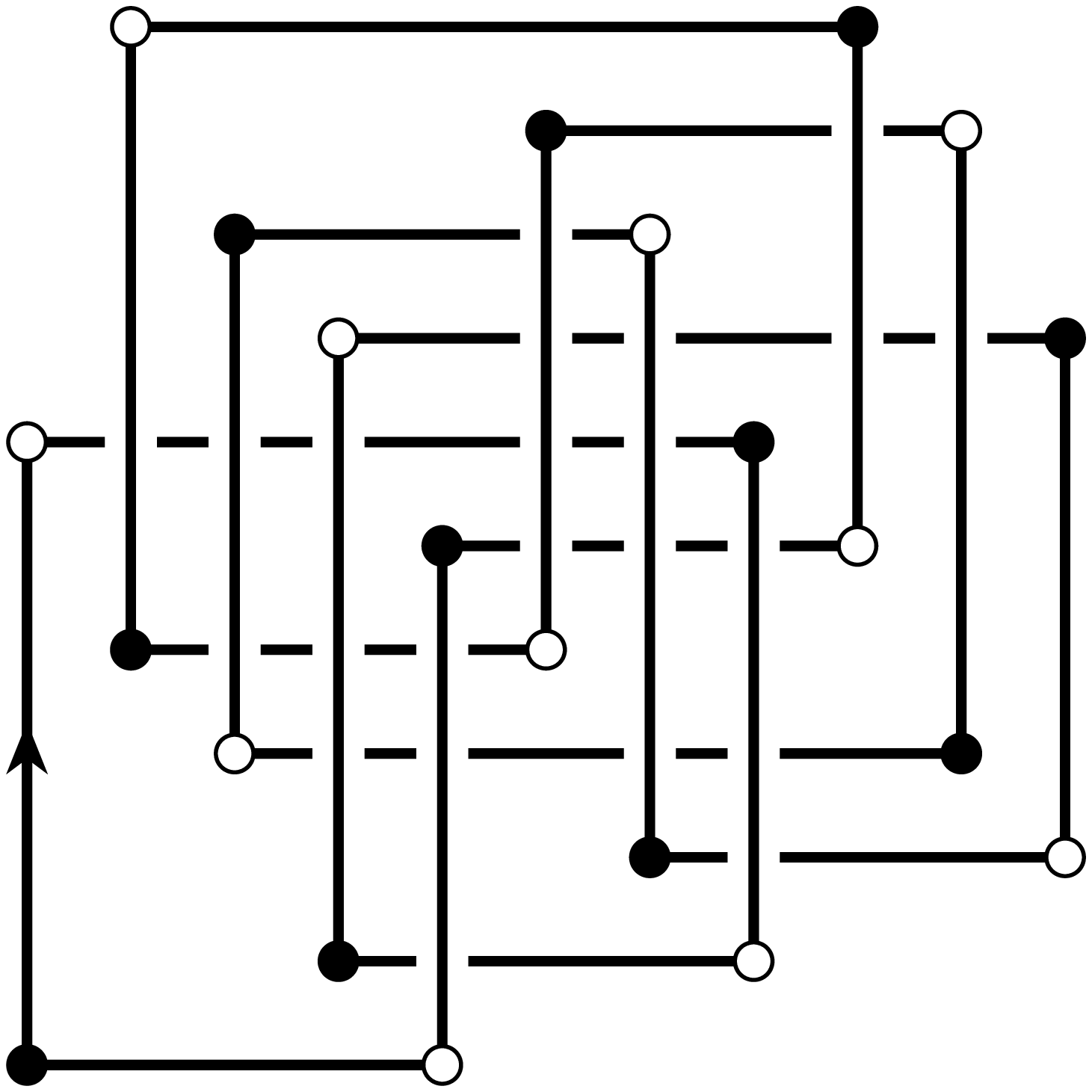}
&\hspace{1cm}&
\includegraphics[width=150pt]{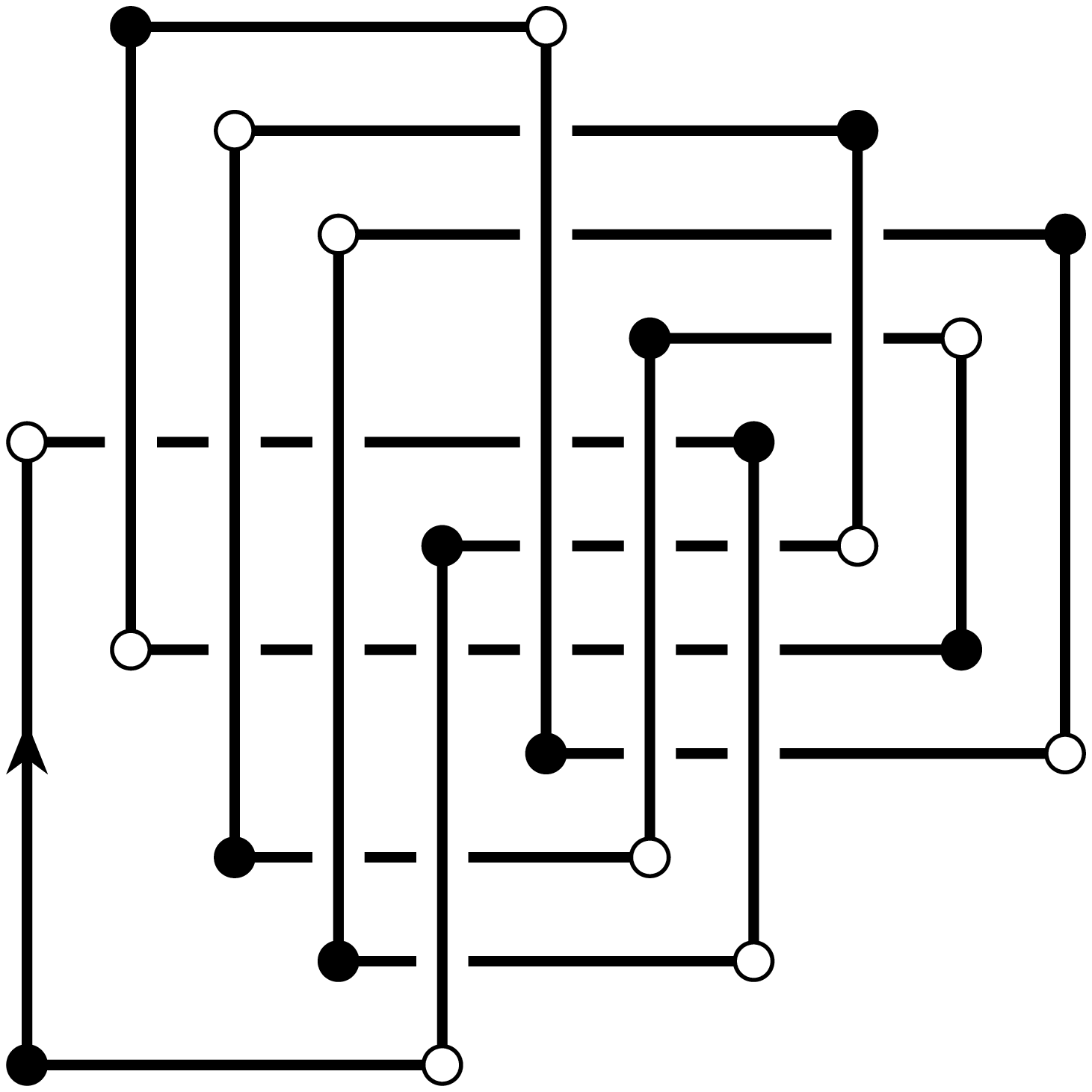}
\\[-3mm]
$R_{\text{\Cloud}}$&&$R_{\text{\SunCloud}}$
\\[5mm]
\includegraphics[width=150pt]{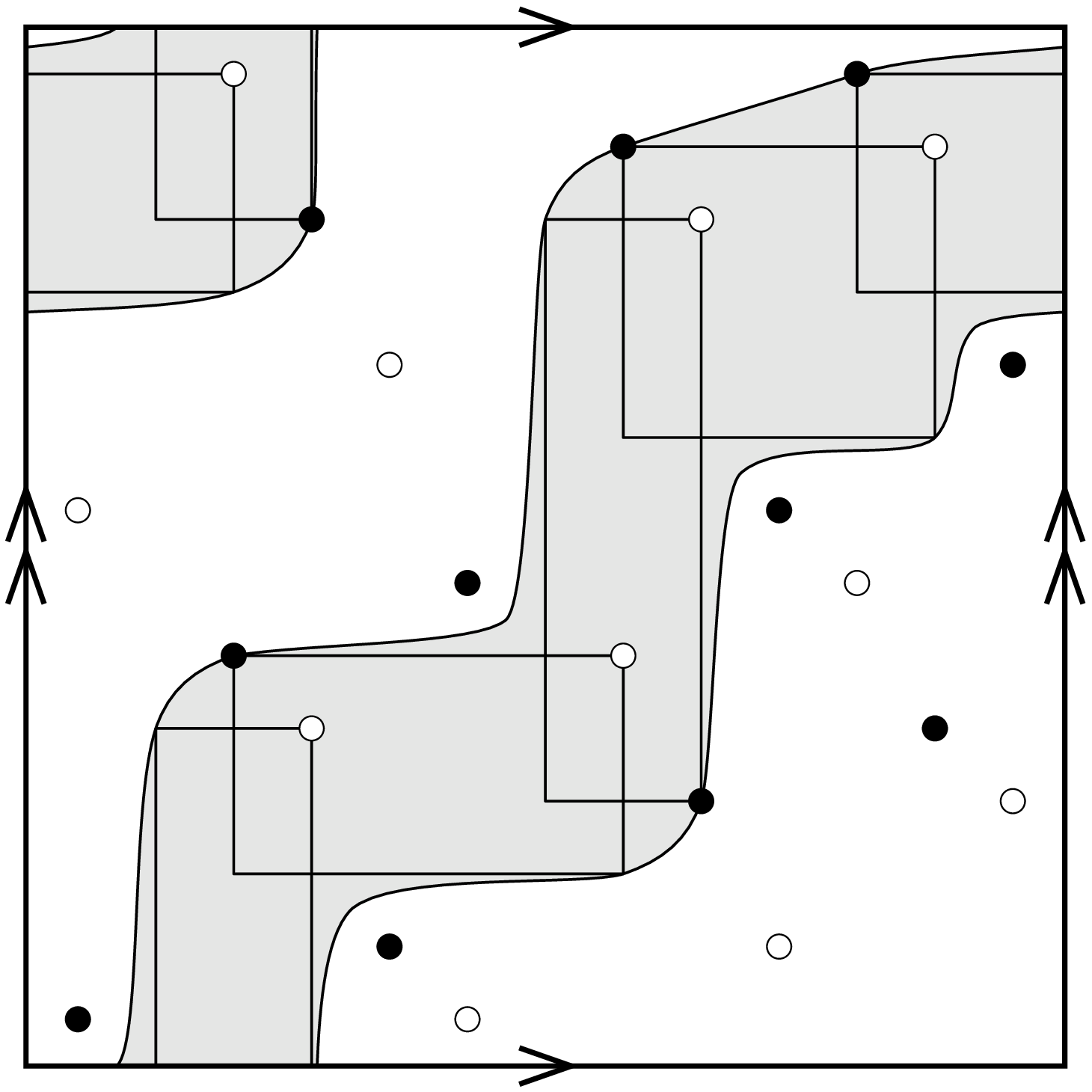}
&&
\includegraphics[width=150pt]{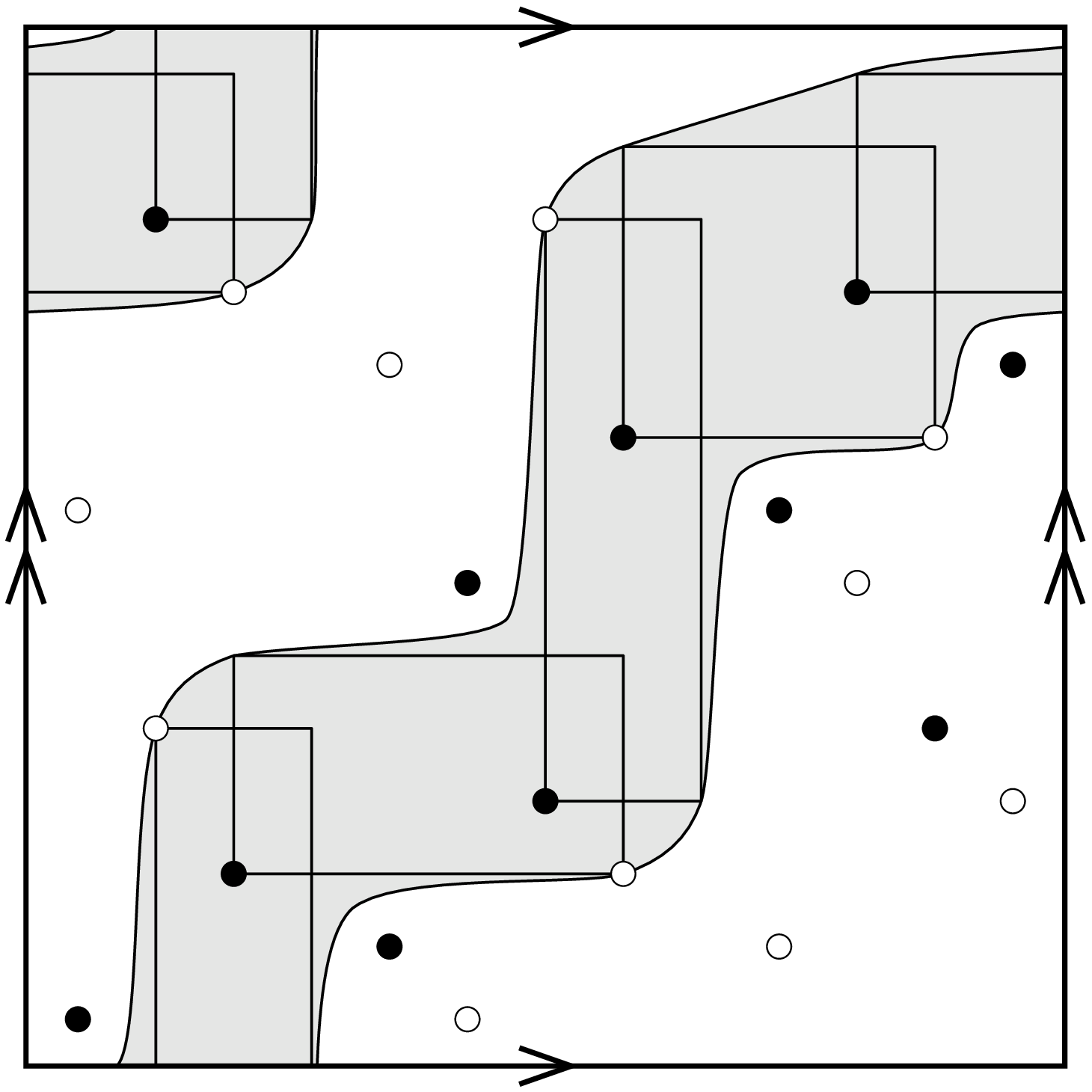}
\end{tabular}
\caption{The transitions~$R_{\text{\Cloud}}\mapsto R_{\text{\SunCloud}}$ and~$R_{\text{\SunCloud}}\mapsto R_{\text{\Cloud}}$ are (combinatorially) a $\swarrow$-flype and
a $\nearrow$-flype, respectively}\label{exam-fig}
\end{figure}
It is then two elementary moves preserving the number of edges
(exchange moves) to obtain~$R_{\text{\Sun}}$ from~$R_{\text{\SunCloud}}$ (a shift one step up is also in order).

\section{Proof of Theorem~\ref{main-th}}\label{proof-sec}
\subsection{Preparations}
We keep the notation and the settings from Section~\ref{def-sec}.
In particular, we use
the bijection~$v\mapsto\overline v$ from~$A\setminus\partial A$ to itself
and extend it to the whole of~$A$ by continuity. Namely, for~$v\in\partial_1A$ (respectively, $v\in\partial_2A$),
the point~$\overline v$ is defined by the condition that a connected
component of the intersection of some meridian (respectively, longitude)
with~$A$ has the form~$\{\theta_0\}\times[\varphi_1;\varphi_2]$
with~$(\theta_0,\varphi_1)=v$  and~$(\theta_0,\varphi_2)=\overline v$
(respectively, $[\theta_1;\theta_2]\times\{\varphi_0\}$ with~$(\theta_1,\varphi_0)=v$
and~$(\theta_2,\varphi_0)=\overline v$). If~$v\in\partial A$,
then the notation~$\sigma_{r_v}$ refers to the identically zero function on~$\mathbb T^2$.

We assume that
$R\mapsto R'$ is a $\nearrow$-multiflype based an annulus~$A\subset\mathbb T^2$.

We say that an elementary move~$R_1\mapsto R_2$ is performed \emph{inside~$A$}
if~$\sigma_{R_1}-\sigma_{R_2}=\pm\sigma_r$, where~$r$ is a rectangle
contained in~$A$ such that~$r\cap R_1\subset V(r)$, where by~$V(r)$ we denote the set of vertices of~$r$.

\begin{lemm}\label{ind-step-lem}
Let~$R\mapsto R_1$ be an elementary move performed inside~$A$. Suppose that~$A$
is still suitable for defining a $\nearrow$-multiflype on~$R_1$.
Let~$R_1\mapsto R_1'$ be this $\nearrow$-multiflype.
Then~$R_1\mapsto R_1'$ is an elementary move performed inside~$A$.
\end{lemm}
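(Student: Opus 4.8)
Throughout write $R'$ for the $\nearrow$-multiflype of $R$ based on $A$, furnished by Proposition~\ref{well-defined-prop}; the content of the lemma is that $R'$ and $R_1'$ are related by an elementary move performed inside $A$, so that the elementary move and the multiflype commute. The plan is to exploit that the right-hand side of~\eqref{sigma-flype-eq}, viewed as a transformation of the characteristic function, is \emph{linear}. Setting
$$\Phi_A(\sigma)=\sigma-\sum_{v\in A\setminus\partial A}\sigma(v)\,\sigma_{r_v},$$
where the summand is understood to vanish when $\sigma(v)=0$, one has $\Phi_A$ a linear operator agreeing with~\eqref{sigma-flype-eq} on characteristic functions, so that $\sigma_{R'}=\Phi_A(\sigma_R)$ and $\sigma_{R_1'}=\Phi_A(\sigma_{R_1})$ (the rectangles $r_v$ depend only on $A$, which by hypothesis is suitable for both $R$ and $R_1$). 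As $R\mapsto R_1$ is an elementary move inside $A$, there is a rectangle $r\subset A$ with $r\cap R\subset V(r)$ and $\sigma_R-\sigma_{R_1}=\pm\sigma_r$, whence by linearity $\sigma_{R'}-\sigma_{R_1'}=\pm\Phi_A(\sigma_r)$. The whole question thus reduces to evaluating $\Phi_A$ on a single trivial rectangle.

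Next I would identify this value. Writing $r=[\theta_1;\theta_2]\times[\varphi_1;\varphi_2]$, let $\bar r$ be the quadrilateral with vertices $\overline c$, where $c$ ranges over the four corners of $r$ and $v\mapsto\overline v$ is the bijection of $A$ extended to $\partial A$. Using only that $\partial_1A$ and $\partial_2A$ are graphs of monotone (positive-slope) functions, one checks that two corners of $r$ sharing a $\varphi$-coordinate have images sharing a $\theta$-coordinate, and symmetrically, so that $\bar r$ is a genuine rectangle. Substituting the definition of $\sigma_{r_c}$ for each interior corner $c$, the terms located at the corners of $r$ cancel the leading term $\sigma_r$, and the terms located on $\partial_1A$ (respectively $\partial_2A$) cancel in pairs, since two corners of $r$ on a common horizontal (respectively vertical) side have the same foot on $\partial_1A$ (respectively $\partial_2A$) but opposite signs in $\sigma_r$. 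What remains is exactly $-\sigma_{\bar r}$. Hence $\sigma_{R'}-\sigma_{R_1'}=\mp\sigma_{\bar r}$, which is the first defining condition of an elementary move with rectangle $\bar r$.

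It then remains to verify that this move is performed inside $A$, that is, $\bar r\subset A$ and $R'\cap\bar r\subset V(\bar r)$, the intersection being one, two, or three successive vertices. Here I would use the monotonicity of $v\mapsto\overline v$: it carries the open rectangle $\operatorname{int}r$ bijectively onto $\operatorname{int}\bar r$, so $\operatorname{int}\bar r\subset A\setminus\partial A$ and, $A$ being closed, $\bar r\subset A$. The interior vertices of $R'$ in $A$ are precisely the images $\overline w$ of the interior vertices $w$ of $R$; since $R\cap r\subset V(r)$ gives $R\cap\operatorname{int}r=\varnothing$, no such $\overline w$ lies in $\operatorname{int}\bar r$, and the same argument on the open sides (whose images again lie in $A\setminus\partial A$) shows $R'\cap\bar r\subset V(\bar r)$. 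Finally, because $v\mapsto\overline v$ respects the corner-adjacency of $r$ and reverses signs, it transports the successive-vertices pattern of $R\cap r$ to $R'\cap\bar r$, giving the required count.

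I expect the genuine obstacle to be the bookkeeping when some corner of $r$ lies on $\partial A$ rather than in $A\setminus\partial A$. For such a corner $\sigma_{r_c}$ is the zero function, the cancellation pattern above is disturbed, and the add/remove-at-the-boundary behaviour described in the proof of Proposition~\ref{well-defined-prop} must be taken into account. One then has to run through the cases according to which corners of $r$ are boundary corners and which of them are actually occupied by $R$, checking in each that $\Phi_A(\sigma_r)$ is still $-\sigma_{\bar r}$ and that the successive-vertices count survives; the hypothesis that $A$ is suitable for both $R$ and $R_1$ (condition~(3) on $A$) is exactly what excludes the degenerate configurations here.
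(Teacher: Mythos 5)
Your reduction is exactly the paper's: by linearity of the operator $\sigma\mapsto\sigma-\sum_{v\in A}\sigma(v)\sigma_{r_v}$ (with $\sigma_{r_v}\equiv0$ for $v\in\partial A$), one gets
$\sigma_{R'}-\sigma_{R_1'}=\pm\bigl(\sigma_r-\sum_{v\in V(r)}\sigma_r(v)\sigma_{r_v}\bigr)=\mp\sigma_{\overline r}$,
and your verification of this identity and of $\overline r\cap R'\subset V(\overline r)$ is correct --- indeed more detailed than the paper's --- as long as $r\subset A\setminus\partial A$. (Like the paper's own proof, you read the conclusion as ``$R'\mapsto R_1'$ is an elementary move inside $A$'', which is what the induction needs.) The genuine gap is precisely the part you postpone in your last paragraph: the case $r\cap\partial A\ne\varnothing$ is not a routine ``run through the cases'' afterthought but is where the paper's proof does its real work, and the strategy you sketch for it would not go through. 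Your claim that $v\mapsto\overline v$ ``transports the successive-vertices pattern of $R\cap r$ to $R'\cap\overline r$'' fails when a corner of $r$ lies on $\partial A$: for a boundary corner $c$, whether $\overline c\in R'$ is governed not by whether $c\in R$ but by the add/remove rule on $\partial A$ from the proof of Proposition~\ref{well-defined-prop} (counting vertices of $R$ on the maximal horizontal or vertical arc), so membership need not be transported.

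The paper circumvents transport at boundary corners by two observations you are missing. First, the condition ``$R'\cap V(\overline r)$ consists of one, two, or three successive vertices'' is equivalent to the existence of a pair of \emph{opposite} corners of $\overline r$ exactly one of which lies in $R'$; since the bottom-left and top-right corners $v_1,v_3$ of $r$ can never lie on $\partial A$ (positive slope of $\partial A$), the interior bijection alone settles every case in which exactly one of $v_1,v_3$ belongs to $R$, whatever happens at $\overline v_2,\overline v_4$. Second, in the residual configurations ($R\cap\{v_1,v_3\}=\varnothing$ or $\{v_1,v_3\}$, reduced by exchanging $R$ with $R_1$ and $\theta$ with $\varphi$ to $R\cap r=\{v_1,v_2,v_3\}$), the hypothesis that saves the day is not condition~(3) on $A$, as you guess, but condition~(2): $v_4$ is then a \emph{crossing} of $R$, hence $v_4\notin\partial A$, which forces $r\cap\partial A\subset\{v_2\}$, and a direct check of the boundary rule then gives $R'\cap V(\overline r)=\{\overline v_1,\overline v_2,\overline v_3\}$. (By contrast, the identity $\sigma_r-\sum_{v\in V(r)}\sigma_r(v)\sigma_{r_v}=-\sigma_{\overline r}$ does persist for boundary-touching $r$ --- the vanishing term $\sigma_{r_{v_2}}$ is compensated because feet of the other corners land on $\overline v_2$ --- so that half of your deferred bookkeeping is only a computation; the count of $R'\cap V(\overline r)$ is the part that genuinely requires the above ideas.)
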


\begin{proof}
Let~$r\subset A$ be  a rectangle such that~$\sigma_{R}-\sigma_{R_1}=\epsilon\sigma_r$ and
$r\cap R\subset V(r)$, where~$\epsilon=\pm1$,
and let~$v_1,v_2,v_3,v_4$ be the vertices of~$r$ numbered counterclockwise
with~$v_1$ being the bottom left vertex.

Equality~\eqref{sigma-flype-eq} can be rewritten as
$$\sigma_{R'}=\sigma_R-\sum_{v\in A}\sigma_R(v)\sigma_{r_v},$$
since there are only finitely many points at which~$\sigma_R$
does not vanish, and for~$v\in\partial A$ we put~$\sigma_{r_v}\equiv0$.
Similarly, we have
$$\sigma_{R_1'}=\sigma_{R_1}-\sum_{v\in A}\sigma_{R_1}(v)\sigma_{r_v},$$
and hence,
$$\sigma_{R'}-\sigma_{R'_1}=\epsilon\Bigl(\sigma_r-\sum_{v\in A}\sigma_r(v)\sigma_{r_v}\Bigr)
=\epsilon\Bigl(\sigma_r-\sum_{v\in V(r)}\sigma_r(v)\sigma_{r_v}\Bigr).$$
One can verify (consult Figure~\ref{r-bar-fig}) that, whichever rectangle~$r\subset A$ is,
\begin{figure}[ht]
\includegraphics[width=150pt]{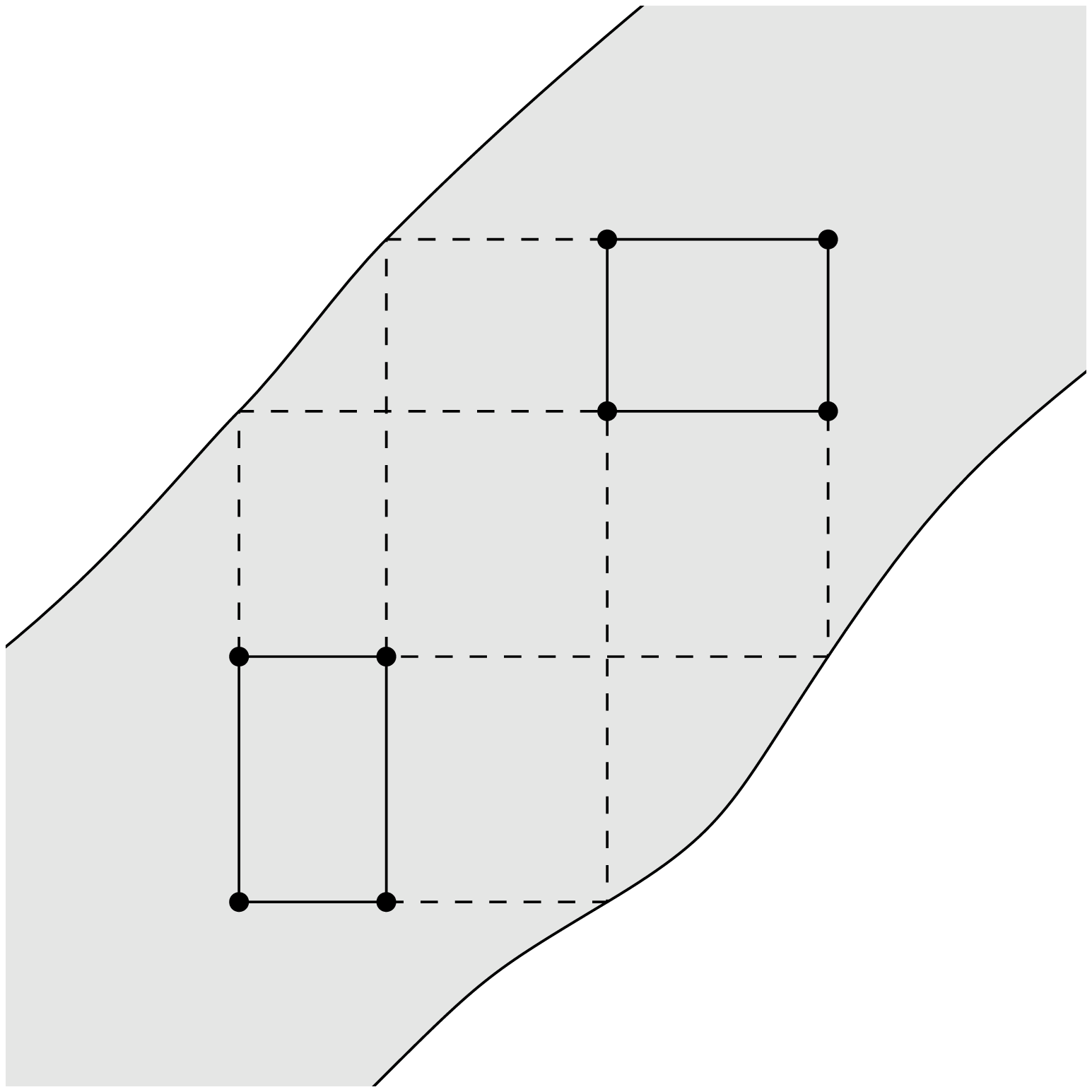}
\put(-108,42){$r$}\put(-54,102){$\overline r$}
\put(-127,20){$v_1$}\put(-95,20){$v_2$}
\put(-127,63){$v_4$}\put(-95,63){$v_3$}
\put(-76,84){$\overline v_1$}\put(-35,84){$\overline v_4$}
\put(-76,120){$\overline v_2$}\put(-35,120){$\overline v_3$}
\put(-145,5){$A$}
\caption{The rectangle~$\overline r$}\label{r-bar-fig}
\end{figure}
the following identity holds
$$\sigma_r-\sum_{v\in V(r)}\sigma_r(v)\sigma_{r_v}=-\sigma_{\overline r},$$
where~$\overline r=\{\overline u:u\in r\}$ is also a rectangle, and the vertices of~$\overline r$ listed
clockwise are~$\overline v_1$, $\overline v_2$, $\overline v_3$, $\overline v_4$. Thus, we have
$$\sigma_{R_1'}=\sigma_{R_1}+\epsilon\sigma_{\overline r}.$$

We have seen in the proof of Proposition~\ref{well-defined-prop} the following:
$$R'\cap(A\setminus\partial A)=\{\overline v:v\in R\cap(A\setminus\partial A)\}.$$
Since
$r\setminus V(r)\subset A\setminus\partial A$ and
$r\cap R\subset V(r)$,
we have
$\overline r\cap R'\subset V(\overline r)$.

To ensure that~$R'\mapsto R_1'$ is an elementary move it remains to verify that~$R'\cap V(\overline r)$
consists of exactly one, two, or three successive vertices of~$\overline r$.
This is equivalent to saying that there are two vertices of~$\overline r$ opposite to one another
and such that exactly one of them belongs to~$R'$.

If~$r\subset A\setminus\partial A$ then~$\overline v_i\in R'$ if and only if~$v_i\in R$.
In this case, $R'\cap V(\overline r)$ consists of exactly one, two, or three successive vertices of~$\overline r$,
since the same is true for~$R\cap V(r)$ and~$r$ by assumption.

The vertices~$v_1$ and~$v_3$ always lie in~$A\setminus\partial A$,
hence, if~$R\cap\{v_1,v_3\}=\{v_1\}\text{ or }\{v_3\}$,
then~$R'\cap\{\overline v_1,\overline v_3\}=\{\overline v_1\}\text{ or }\{\overline v_3\}$, so,
the required condition on the intersection~$R'\cap V(\overline r)$ holds true.

We are left with the cases when~$R\cap\{v_1,v_3\}=\varnothing\text{ or }\{v_1,v_3\}$,
and~$r\cap\partial A\ne\varnothing$. We may assume without loss of generality
that~$R\cap r=\{v_1,v_2,v_3\}$,
since the other remaining cases are obtained from this one by exchanging~$R$ with~$R_1$
and/or $\theta$ with~$\varphi$.

In this case, $v_4$ is a crossing of~$R$, therefore, by assumption, $v_4\notin\partial A$. The only
non-trivial option for~$r\cap\partial A$ is~$\{v_2\}$. It is a direct check that,
in this case, $R'\cap V(\overline r)=\{\overline v_1,\overline v_2,\overline v_3\}$. This completes
the proof of the lemma.\end{proof}

For any point~$v\in A\setminus\partial A$, denote by~$r^v$ the rectangle~$r_u$ with~$\overline u=v$.
Denote also by~$\Delta^+_v$ (respectively, $\Delta^-_v$) the closure of the connected component
of~$A\setminus(r_v\cup r^v)$ having empty intersection with~$\partial_1A$ (respectively, $\partial_2A$),
and by~$\Omega_v$ the union~$\Delta^+_v\cup\Delta^-_v\cup r_v$ (see Figure~\ref{omega-fig}).
\begin{figure}[ht]
\includegraphics[width=150pt]{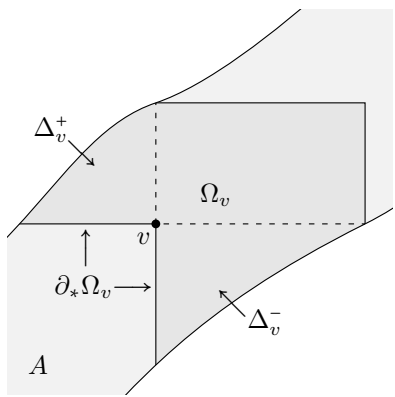}\put(-75,75){$\Omega_v$}
\put(-140,10){$A$}\put(-99,59){$v$}
\put(-127,97){\rotatebox{-45}{$\longrightarrow$}}\put(-138,100){$\Delta^+_v$}
\put(-70,40){\rotatebox{-45}{$\longleftarrow$}}\put(-57,27){$\Delta^-_v$}
\put(-120,50){\rotatebox{90}{$\longrightarrow$}}
\put(-108,40){$\longrightarrow$}\put(-130,40){$\partial_*\Omega_v$}
\caption{The domain~$\Omega_v$}\label{omega-fig}
\end{figure}
By~$\partial_*\Omega_v$ we denote the following part of the boundary of~$\Omega_v$:
$$\partial_*\Omega_v=(\partial\Omega_v\setminus\partial A)\cap r^v.$$
One can see that~$u\in\partial_*\Omega_v$ is equivalent to~$\overline u\in\partial_*\Omega_{\overline v}$,
and~$\partial_*\Omega_{\overline v}\subset\partial\Omega_v$.

Now choose a point~$u_0\in A\setminus\partial A$ such that neither~$u_0$ nor~$\overline u_0$
belongs to a meridian or a longitude containing an edge of~$R$.
Denote~$\overline u_0$ by~$u_1$.

The proof of Theorem~\ref{main-th} is by induction in the number of vertices of~$R$
contained in~$\Omega_{u_0}\setminus\partial A$.

\subsection{The induction base}
Suppose that~$R\cap(\Omega_{u_0}\setminus\partial A)=\varnothing$.
Pick a smooth parametrized path~$t\mapsto u_t$, $t\in[0;1]$, starting at~$u_0$ and ending at~$u_1$ and such that:
\begin{enumerate}
\item
$u_t\in A\setminus(\partial A\cup\Omega_{u_0})$ for all~$t\in (0;1)$;
\item
$u_t$ avoids crossings and vertices of~$R$;
\item
$d\theta(u_t)/dt<0$ and~$d\varphi(u_t)/dt<0$ for all~$t\in[0;1]$.
\end{enumerate}
Observe that we also have~$d\theta(\overline u_t)/dt<0$ and~$d\varphi(\overline u_t)/dt<0$ for all~$t\in[0;1]$.

For brevity, denote~$\Omega_{u_t}$ by~$\Omega_t$.
For~$0<t'<t''<1$, denote also by~$\Omega_{t',t''}$ the union~$\bigcup_{t\in(t';t'']}\Omega_t$.
Clearly, we have
$$\bigcup_{t\in(0,1)}\partial_*\Omega_t=A\setminus(\partial A\cup\Omega_0),$$
hence, all points from~$R\cap(A\setminus\partial A)$ are contained in the union~$\bigcup_{t\in(0,1)}\partial_*\Omega_t$.

Let~$t_1<t_2<\ldots<t_m$ be all moments~$t\in(0;1)$ at which a vertex of~$R$ appears
on~$\partial_*\Omega_{u_t}$. Put~$R_0=R$, and define oriented rectangular diagrams of a link~$R_1,R_2,\ldots,R_m$
as follows:
\begin{equation}\label{ri-eq}
\sigma_{R_i}=\sigma_R-\sum_{v\in\Omega_{0,t_i}}\sigma_R(v)\sigma_{r_v},\quad i=1,\ldots,m.
\end{equation}
By construction, we have~$R_m=R'$.

Now we claim that~$R_{i-1}\mapsto R_i$ is either an elementary move or a composition of two
elementary moves for any~$i=1,\ldots,m$. Indeed, according to~\eqref{ri-eq},
the intersection~$R_{i-1}\cap(A\setminus\partial A)$ is obtained from~$R\cap(A\setminus\partial A)$
by replacing each vertex~$v\in R\cap(\Omega_{0,t_{i-1}}\setminus\partial A)$
with~$\overline v$. If~$v\in R\cap(\Omega_{0,t_{i-1}}\setminus\partial A)$,
then~$v\in\partial_*\Omega_{t_j}$ for some~$j=1,\ldots,i-1$,
which implies $\overline v\in\partial_*\Omega_{\overline u_{t_j}}$.
The union~$\bigcup_{j=0}^{i-1}\partial_*\Omega_{\overline u_{t_j}}$ is disjoint
from~$\Omega_{t_i}$. Therefore, the only intersection of~$\Omega_{t_i}$ with~$R_{i-1}$
consists of vertices of~$R$ lying at~$\partial_*\Omega_{t_i}$.

Since, by construction, $u_{t_i}$ is not a vertex or a crossing of~$R$,
there are at most two vertices of~$R$ in~$\partial_*\Omega_{t_i}$.

Suppose that there is a single vertex~$v$, say, in~$R\cap\partial_*\Omega_{t_i}$.
The rectangle~$r_v$ is a subset of~$\Omega_{t_i}$, therefore,
$$R_{i-1}\cap r_v\subset R_{i-1}\cap\Omega_{t_i}\subset\partial_*\Omega_{t_i}\cup\partial A.$$
We also have
$$r_v\cap(\partial_*\Omega_{t_i}\cup\partial A)\subset V(r_v),$$
which implies~$R_{i-1}\cap r_v\subset V(r_v)$.
We also have~$\overline v\notin\partial_*\Omega_{t_i}\cup\partial A$. Thus~$v\in R_{i-1}\cap r_v$
and~$\overline v\notin R_{i-1}\cap r_v$. This implies that~$R_{i-1}\mapsto R_i$
is an elementary move.

Now suppose that~$R\cap\partial_*\Omega_{t_i}$ consists of two vertices of~$R$.
Denote the one which closer to~$u_{t_i}$ (in the Euclidean metric restricted to~$\Omega_{t_i}$)
by~$v_1$, and the other one by~$v_2$. Define~$R_{i-1}'$ by
$$\sigma_{R_{i-1}'}=\sigma_{R_{i-1}}-\sigma_{R_{i-1}}(v_1)\sigma_{r_{v_1}}.$$
Then the transition~$R_{i-1}\mapsto R_{i-1}'$ is an elementary move for the same reason
as in the previous case.

To see that~$R_{i-1}'\mapsto R_i$ is an elementary move we note that~$R_{i-1}\cap(r_{v_2}\setminus V(r_{v_2}))$
contains only the vertex~$v_1$, which is no longer present in~$R_{i-1}'$.
It is replaced by~$\overline v_1$, which is outside of~$r_{v_2}$ (see Figure~\ref{v1v2-fig}).
\begin{figure}[ht]
\includegraphics[width=150pt]{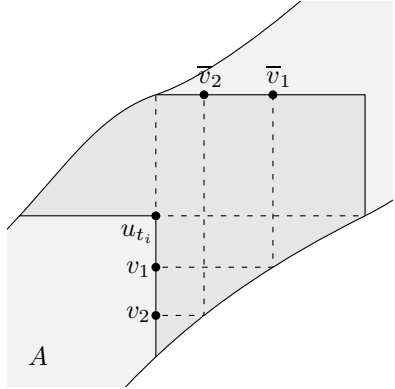}
\put(-140,10){$A$}\put(-105,59){$u_{t_i}$}
\put(-103,45){$v_1$}\put(-103,28){$v_2$}
\put(-50,117){$\overline v_1$}
\put(-76,117){$\overline v_2$}
\caption{The case when two vertices of~$R_{i-1}$ appear
on~$\partial_*\Omega_{t_i}$}\label{v1v2-fig}
\end{figure}

Thus, we have found a sequence of elementary moves producing~$R'$ from~$R$ in the case
when~$R\cap(\Omega_0\setminus\partial A)=\varnothing$.

\subsection{The induction step}
Suppose that~$|R\cap(\Omega_0\setminus\partial A)|=m>0$ and
the theorem is proved in the case when~$|R\cap(\Omega_0\setminus\partial A)|<m$.
We are going to find an elementary move~$R\mapsto R_1$ performed inside~$A$
such that~$A$ is still suitable for defining a $\nearrow$-multiflype on~$R_1$ (possibly after
a small modification of~$A$ not affecting the multiflype~$R\mapsto R'$),
and~$|R\cap(\Omega_0\setminus\partial A)|=m-1$. The induction step will then follow from
Lemma~\ref{ind-step-lem}.

Denote: $(\theta_0,\varphi_0)=u_0$, $(\theta_1,\varphi_1)=u_1=\overline u_0$.
Let~$v=(\theta_2,\varphi_2)$ be the closest to~$u_1$ point in~$R\cap(\Omega_0\setminus\partial A)$
(if there are more than one such point choose any of them).
There are the following three cases to consider.

\medskip\noindent\emph{Case 1}: $v\in r_{u_0}\setminus\partial r_{u_0}$. For~$\varepsilon>0$,
define~$r(\varepsilon)$ to be the rectangle~$[\theta_2;\theta_1+\varepsilon]\times[\varphi_2;\varphi_1+\varepsilon]$
(see the left picture in Figure~\ref{repsilon-fig}).
\begin{figure}[ht]
\includegraphics[width=150pt]{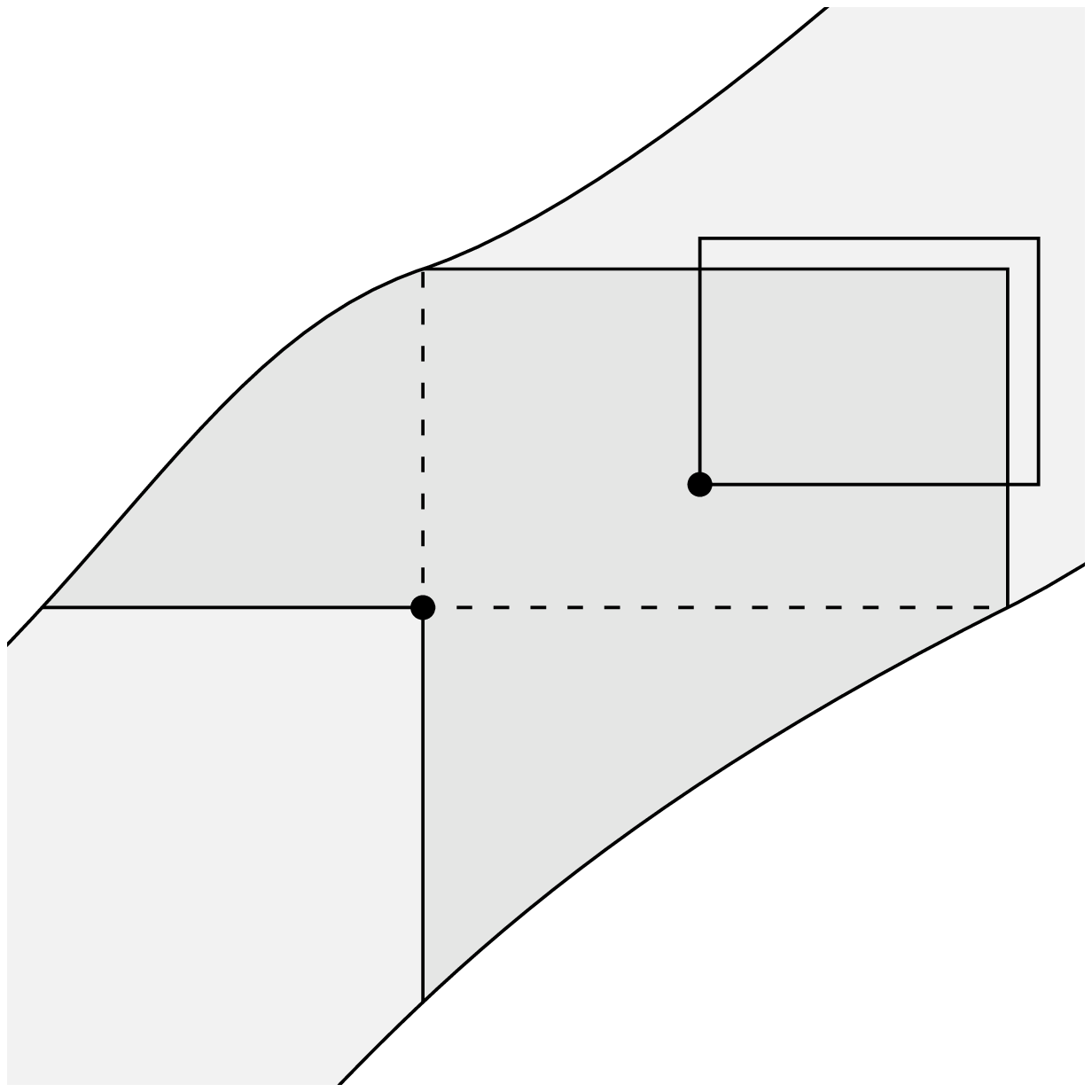}
\put(-140,10){$A$}\put(-103,59){$u_0$}\put(-60,75){$v$}\put(-38,97){$r(\varepsilon)$}
\hskip2cm
\includegraphics[width=150pt]{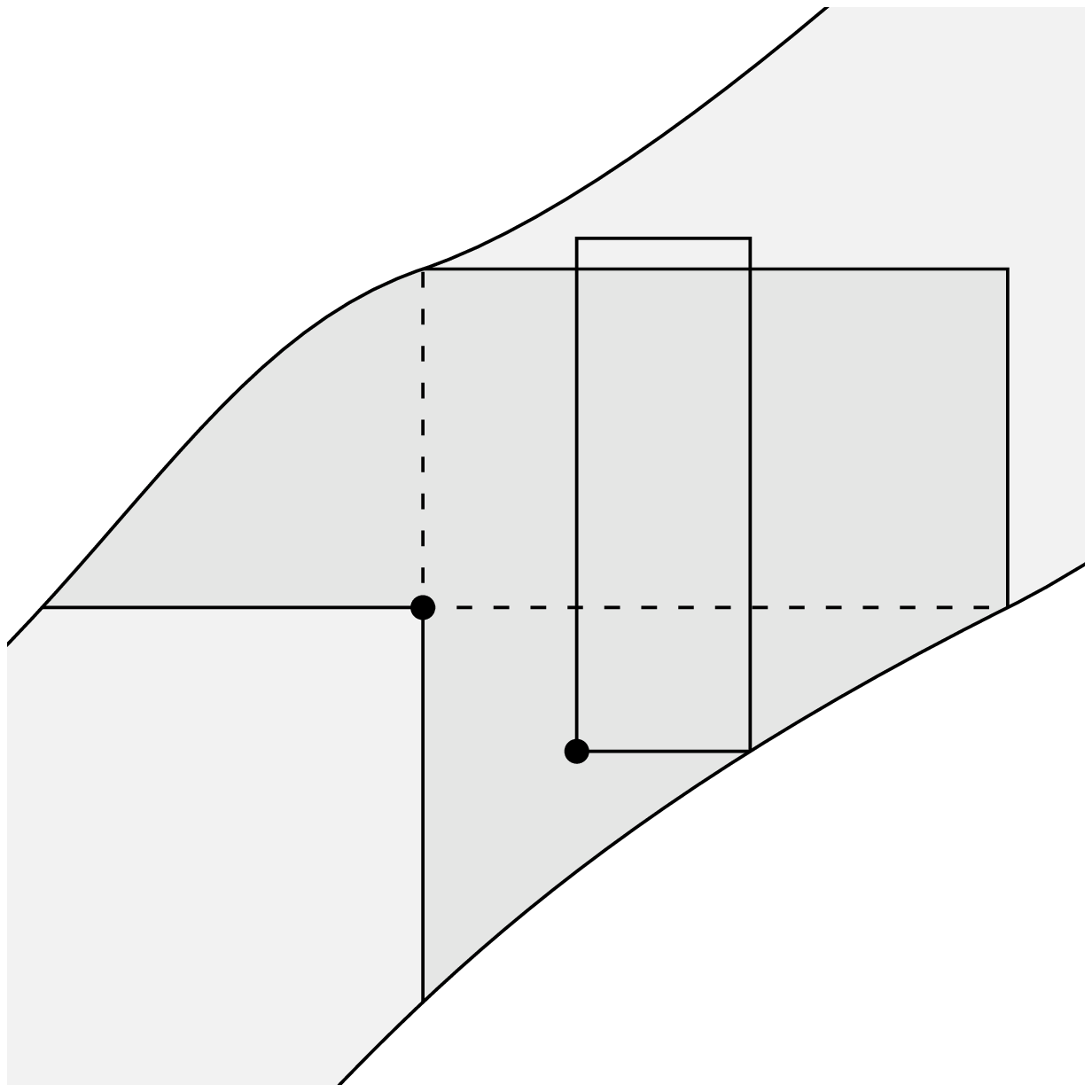}
\put(-140,10){$A$}\put(-103,59){$u_0$}\put(-77,39){$v$}\put(-67,80){$r(\varepsilon)$}
\caption{The rectangle~$r(\varepsilon)$}\label{repsilon-fig}
\end{figure}
For small enough~$\varepsilon$, the following conditions hold:
\begin{enumerate}
\item
the rectangle~$r(\varepsilon)$ is contained in~$A$;
\item
$R\cap r(\varepsilon)=v$;
\item
the meridian~$m_{\theta_1+\varepsilon}$ and the longitude~$\ell_{\varphi_1+\varepsilon}$
are disjoint from~$R$.
\end{enumerate}
Therefore, there is an elementary move~$R\mapsto R_1$ performed inside~$A$ such
that~$\sigma_{R_1}=\sigma_R-\sigma_R(v)\sigma_{r(\varepsilon)}$.
We clearly have~$|R_1\cap(\Omega_0\setminus\partial A)|=m-1$ as~$v$
has been replaced by three vertices outside of~$\Omega_0$.

By choosing~$\varepsilon$ small enough we can also ensure that~$A$ is still suitable to define
a $\nearrow$-multiflype on~$R_1$. Indeed, due to the nature of the conditions imposed on~$A$,
there are only finitely many~$\varepsilon$ for which those conditions are violated.

\medskip\noindent\emph{Case 2}: $v\in\Delta^-_{u_0}$. Denote~$\theta_3=p_\theta(\overline v)$.
We define~$r(\varepsilon)$ to be the rectangle~$[\theta_2;\theta_3]\times[\varphi_2;\varphi_1+\varepsilon]$
(see the right picture in Figure~\ref{repsilon-fig})
and proceed as in the previous case. A minor subtlety occurs only when~$(\theta_3,\varphi_2)$
is not a vertex of~$R$, in which case the diagram~$R_1$ is forced to have an edge at
the new meridian~$m_{\theta_3}$, which does not depend on~$\varepsilon$.
This may result in failing of
the last condition imposed on~$A$ for defining a $\nearrow$-multiflype on~$R_1$.

However, this is easily resolved by a small perturbation of~$\partial A$
near the intersections with~$m_{\theta_3}$ other than~$(\theta_3,\varphi_2)$.
Such perturbations do not affect the flype~$R\mapsto R'$, since these points
are not contained in any longitude or meridian passing through a vertex of~$R$.

\medskip\noindent\emph{Case 3}: $v\in\Delta^+_{u_0}$.
This case is symmetric to the previous one and left to the reader.

\medskip
The proof of Theorem~\ref{main-th} is now complete.

\section{Concluding remarks}
By \emph{a $\diagup$-multiflype} (respectively, \emph{a $\diagdown$-multiflype})
we call any $\nearrow$- or $\swarrow$-multiflype (respectively, $\nwarrow$- or $\searrow$-multiflype).

The proof of Theorem~\ref{main-th} given above provides an algorithm for
decomposing any multiflype into a sequence of elementary moves.
By following the lines of the proof one can see that the decomposition of
a $\diagup$-multiflype consists of elementary
moves that are particular cases of $\diagup$-multiflypes.
Similarly for $\diagdown$-multiflypes.

Any exchange move (or commutation) of rectangular diagrams of links can be simultaneously
viewed as a $\diagup$-multiflype and a $\diagdown$-multiflype.

Stabilizations and destabilization which are $\diagup$-multiflypes are exactly those that
are called type~I (de)stabilization in~\cite{DyPr}. In the terminology of~\cite{OST},
these are (de)stabilizations of types~{\sl X:NE}, {\sl X:SW}, {\sl O:NE}, and~{\sl O:SW}.
Similarly, (de)stabilizations which are $\diagdown$-multiflypes
are those that are of type~II in~\cite{DyPr} and of types~{\sl X:NW}, {\sl X:SE}, {\sl O:NW}, and~{\sl O:SE}
in~\cite{OST}.

With every rectangular diagram of a link~$R$, one associates two Legendrian link types,
one with respect to the standard contact structure~$\xi_{\mathrm{st}}$, and the other
with respect to the mirror image of~$\xi_{\mathrm{st}}$ (see~\cite{DyPr,OST}). We denote them
here by~$\mathscr L_\diagup(R)$ and~$\mathscr L_\diagdown(R)$, respectively.

Due to the remark above the relation between rectangular diagrams of links
and Legendrian links, which is explained in~\cite{DyPr,OST}, can be summarized as follows.

\begin{coro}\label{leg-coro}
Let~$R_1$ and~$R_2$ be oriented rectangular diagrams of links.
We have~$\mathscr L_\diagup(R_1)=\mathscr L_\diagup(R_2)$
\emph(respectively, $\mathscr L_\diagdown(R_1)=\mathscr L_\diagdown(R_2)$\emph)
if and only if~$R_1$ and~$R_2$ are related by a sequence of $\diagup$-multiflypes
\emph(respectively, $\diagdown$-multiflypes\emph).
\end{coro}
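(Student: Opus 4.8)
The plan is to reduce the Corollary to Theorem~\ref{main-th} together with the correspondence between rectangular diagrams and Legendrian links established in~\cite{DyPr,OST}, plus the classification of (de)stabilizations recorded in the preceding remarks. Recall that the Legendrian link type $\mathscr L_\diagup(R)$ is a complete invariant of $R$ up to the equivalence generated by exchange moves (commutations) and $\diagup$-type (de)stabilizations, and symmetrically for $\mathscr L_\diagdown(R)$ with $\diagdown$-type (de)stabilizations. This is precisely the content of the grid-diagram description of Legendrian links, as explained in the cited references. So the whole statement amounts to proving that the equivalence relation ``related by a sequence of $\diagup$-multiflypes'' coincides with the equivalence relation ``related by a sequence of exchange moves and $\diagup$-(de)stabilizations.''

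First I would prove the easy inclusion: any sequence of exchange moves and $\diagup$-(de)stabilizations is a sequence of $\diagup$-multiflypes. By the remark above, every exchange move is simultaneously a $\diagup$- and a $\diagdown$-multiflype, and every $\diagup$-(de)stabilization (types \textsl{X:NE}, \textsl{X:SW}, \textsl{O:NE}, \textsl{O:SW}) is a $\diagup$-multiflype. Hence this inclusion is immediate, and it shows that if $R_1$ and $R_2$ have equal $\mathscr L_\diagup$, then by the Legendrian grid classification they are related by exchange moves and $\diagup$-(de)stabilizations, which are $\diagup$-multiflypes; this gives one direction of the ``if and only if.''

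For the reverse inclusion I would invoke the decomposition algorithm supplied by the proof of Theorem~\ref{main-th}, read through the lens of the first paragraph of Section~5. That proof expresses an arbitrary $\nearrow$-multiflype as a composition of elementary moves, and the concluding remarks assert that for a $\diagup$-multiflype these elementary moves are themselves particular cases of $\diagup$-multiflypes, that is, exchange moves and $\diagup$-(de)stabilizations. Therefore a single $\diagup$-multiflype preserves $\mathscr L_\diagup$, since each constituent elementary move does. Composing, any sequence of $\diagup$-multiflypes preserves $\mathscr L_\diagup$, which yields $\mathscr L_\diagup(R_1)=\mathscr L_\diagup(R_2)$ whenever $R_1$ and $R_2$ are related by such a sequence. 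The $\diagdown$ case is handled by applying one of the symmetries $s_\updownline$, $s_\leftrightline$, which interchanges $\diagup$ with $\diagdown$ and $\mathscr L_\diagup$ with $\mathscr L_\diagdown$.

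The main obstacle I anticipate is not in the formal logic but in pinning down the precise statement of the grid/Legendrian correspondence being imported from~\cite{DyPr,OST} and verifying that the ``$\diagup$'' bookkeeping is consistent across the two inclusions. Concretely, one must check that the classification of (de)stabilization types as $\diagup$ versus $\diagdown$ (the \textsl{NE/SW} versus \textsl{NW/SE} dichotomy) matches exactly the decomposition produced by the proof of Theorem~\ref{main-th}, so that the elementary moves arising there really are of $\diagup$-type and not a mixture. Once that correspondence is confirmed, both inclusions close and the Corollary follows.
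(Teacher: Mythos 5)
Your proposal is correct and takes essentially the same route as the paper, which proves the corollary exactly this way: it imports from~\cite{DyPr,OST} the fact that $\mathscr L_\diagup$ (respectively, $\mathscr L_\diagdown$) classifies diagrams up to exchange moves and type-I (respectively, type-II) (de)stabilizations, and combines it with the remark that the decomposition of a $\diagup$-multiflype furnished by the proof of Theorem~\ref{main-th} consists of elementary moves that are themselves $\diagup$-multiflypes, the $\diagdown$ case following by symmetry. The only minor imprecision is your identification of elementary $\diagup$-multiflypes with exchange moves and $\diagup$-(de)stabilizations alone, since elementary moves also include compositions of these with several exchange moves; but such compositions preserve $\mathscr L_\diagup$ just as well, so the argument is unaffected.
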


In~\cite{pras}, flypes of rectangular diagrams of links are generalized to the case of rectangular
diagrams of graphs. One can similarly generalize multiflypes and
Theorem~\ref{main-th} for general graphs, as well as Corollary~\ref{leg-coro} for Legendrian graphs,
and the proof will need no essential change.

\end{document}